\newtheorem{thm}{Theorem}[section]
\newtheorem{lem}[thm]{Lemma}
\newtheorem{cor}[thm]{Corollary}
\newtheorem{prop}[thm]{Proposition}
\newtheorem{rem}{Remark}[section]
\numberwithin{equation}{section}
\renewcommand{\a}{\alpha}
\renewcommand{\b}{\beta}
\newcommand{\e}{\varepsilon}
\newcommand{\de}{\delta}
\newcommand{\fa}{\varphi}
\newcommand{\ga}{\gamma}
\renewcommand{\k}{\kappa}
\newcommand{\la}{\lambda}
\renewcommand{\th}{\theta}
\newcommand{\Si}{\Sigma}
\renewcommand{\t}{\tau}
\newcommand{\Ga}{\Gamma}
\newcommand{\La}{\Lambda}
\newcommand{\lan}{\langle}
\newcommand{\ran}{\rangle}
\def\R{{\mathbb{R}}}
\def\N{{\mathbb{N}}}
\def\Z{{\mathbb{Z}}}
\title{Equivalence of ensembles under inhomogeneous conditioning
and its applications to random Young diagrams}
\author{Tadahisa Funaki}
\date{March 30, 2011}
\begin{document}
\maketitle

\begin{abstract} 
We prove the equivalence of ensembles for Bernoulli measures on $\Z$
conditioned on two conserved quantities under the situation that one
of them is spatially inhomogeneous.  For the proof, 
we extend the classical local limit theorem for a sum of Bernoulli
independent sequences to those multiplied by linearly growing weights.
The motivation comes from the study of random Young diagrams.  We
discuss the relation between our result and the so-called Vershik
curve which appears in a scaling limit for height functions of 
two-dimensional Young diagrams.
\noindent
\footnote{
Graduate School of Mathematical Sciences,
The University of Tokyo, Komaba, Tokyo 153-8914, Japan.}
\footnote{ 
$\quad$ e-mail: funaki@ms.u-tokyo.ac.jp, 
Fax: +81-3-5465-7011.}
\footnote{
\textit{Keywords: equivalence of ensembles, local limit theorem, 
Young diagram, Vershik curve.}}
\footnote{
\textit{Abbreviated title $($running head$)$: Equivalence of ensembles
and applications to random Young diagrams}}
\footnote{
\textit{2010MSC: primary 60K35, secondary 82B20, 60D05, 60F05.}}
\footnote{
\textit{Supported in part by the JSPS Grants $($A$)$ 22244007
and 21654021.}}
\end{abstract}

\section{Introduction}

The equivalence of ensembles, that is the asymptotic equivalence of canonical
and grand canonical ensembles for large systems,  plays a fundamental role 
in equilibrium statistical physics \cite{R}, \cite{G}, \cite{KL} and also
in some problems related to statistics \cite{DF-1}, \cite{DF-2}.  It is mostly
discussed for canonical ensembles obtained under conditioning on spatially
homogeneous physical quantities.  In our problem, the grand canonical 
ensembles are simply Bernoulli measures on $\Z$, but the system has two
conservation laws so that the corresponding canonical ensembles
are defined through conditioning on a quantity which is not 
translation-invariant.  As a result, the macroscopic profile for the 
grand canonical ensemble turns out to be spatially dependent.  We will show
that this profile has a connection to the so-called Vershik curve which 
appears in a scaling limit for two-dimensional Young diagrams, \cite{V}, 
\cite{BBE}, \cite{FS-1}.

The height function $\psi_q$ of a two-dimensional Young diagram,
which is associated with a distinct partition $q=\{q_1>q_2>\cdots>q_K\ge 1\}$
of a positive integer $M$ by positive integers $\{q_i\}_{i=1}^K$
(i.e.\ $M=\sum_{i=1}^K q_i$), is given by a right continuous non-increasing
step function
$$
\psi_q(u) = \sum_{i=1}^K 1_{\{u<q_i\}}, \quad u\ge 0.
$$
Its height difference $\eta=\{\eta_k\}_{k\in\N}, \N = \{1,2,\ldots\},$ 
is defined by
$$
\eta_k = \psi_q(k-1)-\psi_q(k)\in\{0,1\}
\quad \text{ or } \quad
\eta_k=1 \Longleftrightarrow k\in \{q_i\}_{i=1}^K.
$$
Then, in terms of $\eta$, the height $K$ of the Young diagram at $u=0$
is represented as
$$
\psi_q(0)= \sum_{k\in\N}\eta_k
$$
and the area $M$ as 
$$
\int_0^\infty \psi_q(u)du = \sum_{k\in\N} k\eta_k.
$$
The sequence $\eta=\{\eta_k\}_{k\in\N}$ can be interpreted as defining a 
configuration of particles on $\N$ in the sense that the site $k$ is occupied 
by a particle if $\eta_k=1$ and vacant if $\eta_k=0$ and,
from the viewpoint of random Young diagrams, it is natural to restrict
the space of configurations $\eta$ to those satisfying 
$\sum_{k\in\N} \eta_k =K$ and $\sum_{k\in\N} k\eta_k=M$, given two constants 
$K$ and $M$.  The first condition is equivalent to assign the total number
of particles in the system, while the second sum involves a 
non-translation-invariant linearly growing weight $k$.  The simplest
random structure can be introduced to the set of Young diagrams with
height $K$ at $u=0$ and area $M$ by means of a uniform probability
measure on the space of configurations $\eta$ with these two constraints.
In fact, such random structure is the subject of our study.

The original motivation of this paper comes from the study of the hydrodynamic
limit for an area-preserving random dynamic on two-dimensional Young diagrams
with height differences restricted to be $0$ or $1$.  Such dynamic, which is
a kind of a surface diffusion model studied in physics \cite{PV}, is
introduced in \cite{F}.  As we have seen above, it can be transformed into 
an equivalent particle system on $\N$ with two conservation laws.

We formulate our problem and main results in Section 2.  For the proof,
we need to extend the local limit theorem for a sum of independent
random variables.  If the random variables satisfy certain proper moment
conditions (see, e.g., Theorems VII. 4 and VII.12 of \cite{Pe}), the classical
theory applies, but in our case, such conditions do not hold since
the random variables have unbounded linearly growing weights.
This is discussed in Section 3 relying on the fact that the weights are
linear by analyzing the behavior of characteristic functions.  Sections 4 
and 5 are devoted to the proofs of main results.  We finally show that
the limit curves obtained in \cite{BBE} and here are identical,
although the random structures imposed on the set of Young diagrams
are different.

\section{Main results}

Let us state our problem precisely.  We consider a particle system
on $\Z$ rather than on $\N$ following the usual setting in statistical
physics.  Each site is occupied by at most one particle.
Therefore the particle configuration on $\Z$ is represented by $\eta=
\{\eta_k\}_{k\in\Z}\in \Si :=\{0,1\}^{\Z}$, where $\eta_k=1$ means
that the site $k$ is occupied by a particle and $\eta_k=0$ means that
$k$ is vacant.  For a finite set $\La$ in $\Z$ and $\eta\in \Si$,
we define
$$
K_\La(\eta) := \sum_{k\in\La} \eta_k, \qquad
M_\La(\eta) := \sum_{k\in\La} k\eta_k.
$$
For given $K, M$ and $\ell\in\N$, let $\nu_{\La_\ell,K,M}$ be the uniform
probability measure on the configuration space 
$\Si_{\La_\ell,K,M} = \{\eta\in \Si_{\La_\ell}; K_{\La_\ell}(\eta)=K,
M_{\La_\ell}(\eta)=M\}$, where $\La_\ell=\{k\in\Z; |k|\le \ell\}$
and $\Si_\La = \{0,1\}^{\La}$.  For $\a\in (0,1)$ and
a finite set $\La$ in $\Z$, let $\nu_\a^\La$ be the Bernoulli measure on 
$\Si_\La$ with mean $\a$, that is, $\nu_\a^\La(\eta) =
(1-\a)^{\sharp\{k\in\La;\eta_k=0\}}\a^{\sharp\{k\in\La;\eta_k=1\}}$
for each $\eta\in \Si_\La$.  Then, the measure $\nu_{\La_\ell,K,M}$ 
is identical to the conditional measure of $\nu_\a^{\La_\ell}$ or $\nu_\a^\La$
with $\La_\ell\subset \La$ on $\Si_{\La_\ell,K,M}$:
For $\xi\in \Si_{\La_\ell,K,M}$,
\begin{align*}
\nu_{\La_\ell,K,M}(\xi) &= \nu_\a^{\La_\ell}(\xi|\Si_{\La_\ell,K,M})  \\
&= \nu_\a^\La(\eta_{\La_\ell}=\xi|\Si_{\La_\ell,K,M}\times
\Si_{\La\setminus\La_\ell})  \\
&= \nu_\a^\La(\eta=(\xi,\eta_{\La\setminus\La_\ell})|\Si_{\La_\ell,K,M}\times
\{\eta_{\La\setminus\La_\ell}\})
\end{align*}
independently of the choice of the outer conditions 
$\eta_{\La\setminus\La_\ell}$; see also Lemma \ref{lem:4.1} below.
We have denoted by $\eta_{\La_\ell}$ and $\eta_{\La\setminus\La_\ell}$
the configurations $\eta$ restricted on these sets.

We first observe the possible values of $K=K_{\La_\ell}(\eta)$ and 
$M=M_{\La_\ell}(\eta)$ for $\eta\in \Si_\ell$.  It is easy to
see that $0\le K\le 2\ell+1$, while the values of $M$ range as
$$
|M|\le \frac12 \big( K(2\ell+1)-K^2\big)
$$
under the condition that $K_{\La_\ell}(\eta)=K$.  Indeed the extreme
values of $M$ given $K$ are attained when all $K$ particles are closely
packed at the right or left most edges on $\La_\ell$.  Accordingly,
we have that
$$
0\le \frac{K}{2\ell+1} \le 1, \qquad
\left| \frac{M}{(2\ell+1)^2}\right| \le \frac12 \left( \frac{K}{2\ell+1}
 - \left(\frac{K}{2\ell+1}\right)^2 \right).
$$

We fix $p\in\N$ and assume that, for $\ell \in\N$ and $1\le j \le p$,
sequences $K=K_\ell$, $M=M_\ell$ and $k_j=k_{j,\ell}$ are given and satisfy
\begin{align}
&\lim_{\ell\to\infty} \frac{K}{2\ell+1} = \rho\in (0,1), 
  \label{eq:4.E.1}  \\
&\lim_{\ell\to\infty} \frac{M}{(2\ell+1)^2} = m\in (-v/2,v/2), 
  \label{eq:4.E.2}  \\
&\lim_{\ell\to\infty} \frac{k_j}{\ell} = x_j \in (-1,1), 
  \label{eq:4.E.3}
\end{align}
respectively, with distinct limits $\{x_j\}_{j=1}^p$, where $v= \rho(1-\rho)$.
A function $f$ on $\Si$ is called local if it depends only on finitely many
coordinates in $\eta$.  The shift operators $\t_i$ are defined by
$\t_i f(\eta) = f(\t_i \eta)$ and $(\t_i\eta)_k = \eta_{k+i}$ for
$i,k\in\Z$ and $\eta\in\Si$.  We are now at the position to formulate
our main theorem.

\begin{thm} \label{thm:4.E.1}
Let $f_j=f_j(\eta), 1\le j \le p$ be local functions on $\Si$.  Then, under
the conditions \eqref{eq:4.E.1}--\eqref{eq:4.E.3}, we have that
$$
\lim_{\ell\to\infty} E_{\nu_{\La_\ell,K,M}}[\prod_{j=1}^p
\t_{k_{j}} f_j] = \prod_{j=1}^p E_{\nu_{\b(x_j)}}[f_j],
$$
where $\nu_\a, \a\in (0,1)$ denotes the Bernoulli measure on $\Si$ with
mean $\a$ and 
\begin{equation} \label{eq:4.E.rho}
\b(x) \equiv \b(x;a,b) = \frac{e^{bx}a}{e^{bx}a+(1-a)},
\quad x \in [-1,1],
\end{equation}
with two parameters $a\in (0,1)$ and $b\in\R$ determined from $\rho$ and
$m$ by the relations:
\begin{equation} \label{eq:2.5-1}
\int_{-1}^1\b(x)dx = 2\rho, \quad
\int_{-1}^1 x\b(x)dx = 4m.
\end{equation}
The convergence is uniform in $(K,M)$ in the region that
$\e \le K/(2\ell+1) \le 1-\e$ and $M/(2\ell+1)^2 \in (-v/2+\e, v/2-\e)$
for every $\e>0$.
\end{thm}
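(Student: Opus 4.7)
The plan is to realize the canonical measure $\nu_{\La_\ell,K,M}$ as the conditional distribution of an \emph{inhomogeneous} Bernoulli product measure whose single-site parameter at $k\in\La_\ell$ is precisely $\b(k/\ell;a,b)$, and then to strip off the conditioning using the local limit theorem of Section~3.

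For $a\in(0,1)$ and $b\in\R$ set $\mu_{\ell,a,b} := \bigotimes_{k\in\La_\ell} \nu_{\b(k/\ell;a,b)}^{\{k\}}$. A direct computation using \eqref{eq:4.E.rho} gives
$$
\mu_{\ell,a,b}(\eta) \propto \Big(\tfrac{a}{1-a}\Big)^{K_{\La_\ell}(\eta)} e^{(b/\ell)\,M_{\La_\ell}(\eta)},
$$
so $\mu_{\ell,a,b}(\eta)$ depends on $\eta$ only through $(K_{\La_\ell}(\eta),M_{\La_\ell}(\eta))$; hence conditioning on $\Si_{\La_\ell,K,M}$ yields the uniform measure $\nu_{\La_\ell,K,M}$. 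Next, I would choose $(a,b)$ as the unique solution of \eqref{eq:2.5-1}; a Riemann-sum argument shows these are precisely the equations $E_{\mu_{\ell,a,b}}[K_{\La_\ell}]/(2\ell+1)\to\rho$ and $E_{\mu_{\ell,a,b}}[M_{\La_\ell}]/(2\ell+1)^2\to m$, and strict convexity of the logarithmic pressure $(a,b)\mapsto\int_{-1}^1\log(1-a+e^{bx}a)dx$ (equivalently, strict positivity of the Jacobian of $(a,b)\mapsto(\rho,m)$ by Cauchy--Schwarz) guarantees unique, smooth solvability on the admissible region $\{0<\rho<1,\,|m|<v/2\}$.

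Fix a finite set $D\subset\Z$ containing the supports of all $f_j$. For $\ell$ large the translates $k_j+D$ are pairwise disjoint subsets of $\La_\ell$; set $\La^* := \La_\ell\setminus\bigcup_j(k_j+D)$. Using independence of disjoint coordinates under $\mu_{\ell,a,b}$ together with the conditional representation above,
$$
E_{\nu_{\La_\ell,K,M}}\!\Big[\prod_{j=1}^p \t_{k_j} f_j\Big]
= \sum_{\xi_1,\ldots,\xi_p\in\Si_D} \Big(\prod_{j=1}^p f_j(\xi_j)\Big) \Big(\prod_{j=1}^p \mu_{\ell,a,b}(\eta_{k_j+D}=\xi_j)\Big) R_\ell(\xi_1,\ldots,\xi_p),
$$
where
$$
R_\ell := \frac{\mu_{\ell,a,b}\big(K_{\La^*}=K-K',\,M_{\La^*}=M-M'\big)}{\mu_{\ell,a,b}\big(K_{\La_\ell}=K,\,M_{\La_\ell}=M\big)},
$$
and $K' := \sum_j K_{k_j+D}(\xi_j)$, $M' := \sum_j M_{k_j+D}(\xi_j)$ are bounded uniformly in $\ell$. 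Continuity of $\b(\cdot;a,b)$ together with $(k_j+d)/\ell\to x_j$ forces $\mu_{\ell,a,b}(\eta_{k_j+D}=\xi_j)\to\nu_{\b(x_j)}^D(\xi_j)$, which is the $D$-marginal of $\nu_{\b(x_j)}$; the claimed product formula will therefore follow once $R_\ell\to 1$.

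This last step is the main obstacle, and it is exactly the content of Section~3. Under $\mu_{\ell,a,b}$, the vector $(K_{\La_\ell},M_{\La_\ell})$ is a sum of independent but non-identically distributed $\Z^2$-valued summands whose first coordinates are bounded but whose second coordinates $k\eta_k$ have linearly growing amplitudes; Petrov-type local limit theorems do not apply off the shelf and must be extended by direct analysis of the bivariate characteristic function. Granting that extension, after rescaling $K$ by $(2\ell+1)^{1/2}$ and $M$ by $(2\ell+1)^{3/2}$ (dictated by $\mathrm{Var}\,K_{\La_\ell}\sim 2\ell+1$ and $\mathrm{Var}\,M_{\La_\ell}\sim(2\ell+1)^3$), both numerator and denominator in $R_\ell$ are asymptotic to $\mathrm{const}\cdot(2\ell+1)^{-2}$ times the same smooth bivariate Gaussian density at points differing by $O(\ell^{-1/2})$ and $O(\ell^{-3/2})$ respectively, so $R_\ell\to 1$. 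Since the limiting Gaussian density and the parameters $(a,b)$ depend continuously on $(\rho,m)$, the convergence is uniform on the stated compact region, yielding the uniformity clause of the theorem.
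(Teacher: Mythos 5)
Your overall architecture is the same as the paper's: realize $\nu_{\La_\ell,K,M}$ as the conditional law of an inhomogeneous Bernoulli product measure with profile $\b(\cdot;a,b)$ (the paper's Lemma \ref{lem:4.1}), solve \eqref{eq:2.5-1} via positivity of the Jacobian by Cauchy--Schwarz (Lemma \ref{lem:4.2}), factor out the local marginals, and reduce everything to a ratio $R_\ell$ of two local-limit probabilities. However, there is one genuine gap, and it is exactly the point the paper singles out as the novelty. You fix $(a,b)$ once and for all as the solution of \eqref{eq:2.5-1} for the \emph{limiting} values $(\rho,m)$. With that choice, the conditions \eqref{eq:4.E.1}--\eqref{eq:4.E.2} give no rate, so $K-E_n=o(n)$ and $M+(\ell+1)K-F_n=o(n^2)$ only, hence $y_1=o(\sqrt{n})$ and $y_2=o(\sqrt{n})$ may diverge. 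The local limit theorem of Section 3 gives $\sqrt{U_nV_n}\,P(\cdot)=q_0(y_1,y_2)+o(1)$ with an \emph{additive} error that is uniform over $(K,L)$; if $y_1$ or $y_2\to\infty$ then $q_0(y_1,y_2)\to0$, the error term dominates both numerator and denominator of $R_\ell$, and the conclusion $R_\ell\to1$ does not follow. Your closing remark that the two evaluation points differ by $o(1)$ is true but irrelevant: the problem is not the displacement between the two points but that the common main term may vanish.

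The repair is the paper's key device (flagged explicitly in its Remark \ref{rem:4.1}): for each $\ell$ choose $(a_\ell,b_\ell)$ solving \eqref{eq:2.5-1} with the \emph{finite-volume} values $\rho=K/(2\ell+1)$ and $m=M/(2\ell+1)^2$, so that $E_n$ and $F_n$ match $K$ and $M+(\ell+1)K$ up to $O(\sqrt{U_n})$ and $O(\sqrt{V_n})$; then $y_1,y_2=O(1/\sqrt{n})$, $q_0(y_1,y_2)\to q_0(0,0)=\big(2\pi\sqrt{1-\la^2}\big)^{-1}>0$ uniformly on the stated compact region, and $R_\ell\to1$ follows. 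This $\ell$-dependence of the tilting parameters is also why the local limit theorem must be proved for general $\a_k^n$ satisfying only \eqref{eq:3.alpha} rather than for $\a_k^n=\a(k/n)$. Two minor points: the shift in $y_2$ between numerator and denominator is $O(\ell^{-1/2})$, not $O(\ell^{-3/2})$, since the removed sites carry weights of order $\ell$ (harmless, as it is still $o(1)$); and note that the numerator involves sums with the finitely many sites $\bigcup_j(k_j+D)$ deleted, which is why the local limit theorem needs the defect sets $\Ga_n$.
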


For every $\rho$ and $m$, one can find $a$ and $b$ uniquely as is shown
in Lemma \ref{lem:4.2} below.
This theorem asserts that, as $\ell\to\infty$ under the canonical ensemble
$\nu_{\La_\ell,K,M}$, the limit distributions are asymptotically independent
for microscopic regions which are macroscopically separated,
and the microscopic limit distribution around the macroscopic point 
$x\in (-1,1)$ is the grand canonical ensemble $\nu_{\b(x)}$ with
macroscopically dependent profile $\b(x)$.  It will be useful to 
have another expression of $\b$:
\begin{equation} \label{eq:2.5-2}
\b(x) = \frac1b \frac{g'(x)}{g(x)} = \frac1b (\log g(x))',
\end{equation}
where $g(x) = e^{bx}a+(1-a)$.  In particular, we have that
\begin{align} \label{eq:2.a-3}
\int_{-1}^1\b(x)dx =& \frac1b \log\frac{e^ba+(1-a)}{e^{-b}a+(1-a)}, \\
\int_{-1}^1 x\b(x)dx =& \frac1b \log\frac{(e^ba+(1-a))(e^{-b}a+(1-a))}
{(1-a)^2}   \notag   \\
& + \frac1{b^2} \left\{ L_2\left(-\frac{ae^b}{1-a}\right)
- L_2\left(-\frac{ae^{-b}}{1-a}\right)\right\},    \notag
\end{align}
where $L_2(z) := -\int_0^z \frac1t \log(1-t) dt, z<1$ is the Euler
dilogarithm, see \cite{GR}, p.642.

\begin{rem}
{\rm (1)} Theorem \ref{thm:4.E.1} gives the equivalence of ensembles 
under the situation that the system has two conservation laws, especially,
one of them is not translation-invariant.  \newline
{\rm (2)} The uniformity of the convergence near the boundary values of
$K/(2\ell+1)$ and $M/(2\ell+1)^2$ can not be shown, because the cumulant
$\la_3$, which controls the error estimate in the local limit theorem, 
diverges near the boundary values.  \newline
{\rm (3)} The macroscopic mean $\b(x)$ of the limit measure is not
translation-invariant, but the distribution of the microscopic configuration
near each point $x$ is a Bernoulli measure so that it is
translation-invariant.  \newline
{\rm (4)} As we will see in the proof of Lemma \ref{lem:4.2}, $a$ is
increasing in $\rho$ and $b$ is increasing in $m$, respectively.  In 
particular, the constant $m$ measures the extent of the bias for the 
particles, that is,
a larger $m$ implies more particles on the right side. \newline
{\rm (5)} The function $\b(x)$ appears in other context: 
The asymmetric simple exclusion process on $\Z$ with jump rates
$p$ to the right and $q$ to the left satisfying $0<p<1, p+q=1$, has
$\nu_{\b(\cdot;a,b)}$ with $b=\log p/q$ as its invariant measure for
every $a\in (0,1)$, where $\nu_{\b(\cdot)}$ denotes the product measure
on $\Si$ such that $E_{\nu_{\b(\cdot)}}[\eta_k] = \b(k), k\in \Z$,
and the function $\b(\cdot;a,b)$ is defined by \eqref{eq:4.E.rho}
for all $x\in \R$ $($or $x\in\Z)$, see {\rm \cite{L}}, p.382.
\end{rem}

The macroscopic profile $\b(x)$ has a connection to the Vershik curve
which appears in a scaling limit for random Young diagrams distributed
under the restricted uniform (Fermi) statistics.  In fact,
one can associate the height function $\psi^\ell(u), u\in [-\ell-1,\ell]$
of the Young diagram with the particle configuration 
$\eta\in\Si_\ell$ by
\begin{equation} \label{eq:2.a-1}
\psi^\ell(u) = \sum_{k\in\La_\ell:k>u}\eta_k, \quad u\in [-\ell-1,\ell].
\end{equation}
Note that $\psi^\ell$ is a right continuous non-increasing step function
and satisfies
\begin{equation} \label{eq:2.5}
\psi^\ell(-\ell-1) = K_{\La_\ell}(\eta), \quad \psi^\ell(\ell)=0,
\end{equation}
with the area
\begin{equation} \label{eq:2.6}
\int_{-\ell-1}^\ell \psi^\ell(u)\, du = (\ell+1)K_{\La_\ell}(\eta)
+ M_{\La_\ell}(\eta).
\end{equation}
Under the distribution $\nu_{\La_\ell,K,M}$, we consider the macroscopically
scaled height function defined by
\begin{equation} \label{eq:2.a-2}
\tilde{\psi}^\ell(x) := \frac1{\ell}\psi^\ell(\ell x), \quad x\in [-1,1].
\end{equation}

\begin{cor} \label{cor:2.2}
Under the conditions \eqref{eq:4.E.1} and \eqref{eq:4.E.2}, $\tilde{\psi}^\ell$
converges as $\ell\to\infty$ to $\psi$ in probability
in the following sense:
$$
\lim_{\ell\to\infty} \nu_{\La_\ell,K,M}\left( \sup_{x\in [-1,1]}
\left|\tilde{\psi}^\ell(x) - \psi(x) \right| > \de \right) =0,
$$
for every $\de>0$.  The limit $\psi$ is defined by $\psi(x) = \int_x^1
\b(y)dy, x\in[-1,1]$ with $\b(x)$ determined in Theorem \ref{thm:4.E.1}.
\end{cor}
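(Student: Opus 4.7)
The plan is to first establish pointwise convergence in probability $\tilde{\psi}^\ell(x) \to \psi(x)$ for each $x \in [-1,1]$ using Theorem~\ref{thm:4.E.1} to control asymptotic first and second moments, and then promote this to uniform convergence by exploiting the monotonicity of both $\tilde{\psi}^\ell$ and $\psi$ together with continuity of the limit.

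\emph{Pointwise convergence.} Write $\tilde{\psi}^\ell(x) = \ell^{-1}\sum_{k\in\La_\ell,\,k>\ell x}\eta_k$. Theorem~\ref{thm:4.E.1} with $p=1$ and $f_1=\eta_0$ gives $E_{\nu_{\La_\ell,K,M}}[\eta_k]\to\b(y)$ whenever $k/\ell\to y\in(-1,1)$; since $\b$ is continuous, a standard compactness argument upgrades this to uniform convergence on compact subsets of $(-1,1)$. The Riemann sum $\ell^{-1}\sum_{k>\ell x}E[\eta_k]$ then converges to $\int_x^1\b(y)\,dy=\psi(x)$, boundary effects near $k/\ell=1$ being controlled by $0\le E[\eta_k],\b(y)\le 1$. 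Similarly, Theorem~\ref{thm:4.E.1} with $p=2$, $f_1=f_2=\eta_0$ shows that the covariance $E[\eta_{k_1}\eta_{k_2}]-E[\eta_{k_1}]E[\eta_{k_2}]$ tends to $0$, uniformly on compact subsets of $\{(y_1,y_2)\in(-1,1)^2:|y_1-y_2|\ge\de\}$ for every $\de>0$. Splitting the double sum defining $\mathrm{Var}\,\tilde{\psi}^\ell(x)$ into the diagonal ($O(1/\ell)$), the well-separated part $|k_1-k_2|\ge\de\ell$ ($o(1)$ for fixed $\de$), and the near-diagonal part $|k_1-k_2|<\de\ell$ (at most $O(\de\ell^2)$ pairs, total contribution $O(\de)$), then letting $\ell\to\infty$ and $\de\to 0$, gives $\mathrm{Var}\,\tilde{\psi}^\ell(x)\to 0$. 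Chebyshev's inequality yields convergence in probability at each $x\in(-1,1)$. At the endpoints, $\tilde{\psi}^\ell(1)=0=\psi(1)$, and $\tilde{\psi}^\ell(-1)=(K-\eta_{-\ell})/\ell\to 2\rho=\psi(-1)$ by \eqref{eq:4.E.1} and \eqref{eq:2.5-1}.

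\emph{Uniform convergence.} The limit $\psi$ is Lipschitz (since $\b$ is bounded) and non-increasing; $\tilde{\psi}^\ell$ is also non-increasing. Given $\de>0$, fix a partition $-1=x_0<x_1<\cdots<x_n=1$ with $\psi(x_i)-\psi(x_{i+1})<\de$ for all $i$. For $x\in[x_i,x_{i+1}]$, monotonicity gives
$$
\tilde{\psi}^\ell(x_{i+1})-\psi(x_i) \le \tilde{\psi}^\ell(x)-\psi(x) \le \tilde{\psi}^\ell(x_i)-\psi(x_{i+1}),
$$
hence $\sup_{x\in[-1,1]}|\tilde{\psi}^\ell(x)-\psi(x)| \le \max_{0\le i\le n}|\tilde{\psi}^\ell(x_i)-\psi(x_i)| + \de$. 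The finite maximum tends to $0$ in probability by the union bound over $\{x_i\}$, and $\de$ is arbitrary.

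The main obstacle is the variance bound: one must convert the pointwise-in-limit convergence of $E[\eta_{k_1}\eta_{k_2}]$ given by Theorem~\ref{thm:4.E.1} into uniform decay over pairs $(k_1,k_2)$ with scaled coordinates bounded away from the diagonal. This is handled by a compactness-and-continuity principle: if the limit $\b(y_1)\b(y_2)$ is continuous and convergence holds along every rescaled subsequence, convergence must be uniform on compact sets.
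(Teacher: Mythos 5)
Your proof is correct, and it reaches the conclusion by a route that differs in execution from the paper's, though both rest on the same engine: Theorem \ref{thm:4.E.1} with $p=1$ and with $p=2$, $f_1=f_2=\eta_0$, to control the first moment and kill the covariances. The paper does not prove pointwise convergence of $\tilde{\psi}^\ell(x)$ at fixed $x$; instead it pairs $\tilde{\psi}^\ell$ with a test function $\fa\in C([-1,1])$, rewrites $\lan\tilde\psi^\ell,\fa\ran$ as $\ell^{-1}\sum_k\eta_k\tilde\fa(k/\ell)$ with $\tilde\fa(x)=\int_{-1}^x\fa$, proves $L^2$-convergence of this pairing by expressing the second moment as a double \emph{integral} of $E_{\nu_{\La_\ell,K,M}}[\eta_{[\ell x_1]}\eta_{[\ell x_2]}]\tilde\fa(x_1)\tilde\fa(x_2)$ and invoking dominated convergence, and then cites Remark 2.5 of \cite{FS-1} for the upgrade from weak to uniform convergence via monotonicity. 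The advantage of that route is that the diagonal $x_1=x_2$ and the boundary $|x_i|=1$ are Lebesgue-null, so no splitting of the double sum and no uniformity-over-pairs argument is needed. Your route trades this for a direct pointwise statement plus an explicit Polya-type monotonicity argument (which is self-contained, where the paper outsources it), at the cost of two extra technical steps you must supply: the compactness/subsequence argument converting Theorem \ref{thm:4.E.1} into uniform decay of covariances over well-separated pairs with scaled coordinates in a compact subset of $(-1,1)^2$ (which you correctly identify and which does work, since any violating sequence of pairs would, after extraction, contradict the theorem), and a fringe estimate for pairs with $k_i/\ell$ within $\e'$ of $\pm1$, which are not covered by the theorem but contribute only $O(\e')$ to the normalized double sum by the trivial bound $|\mathrm{Cov}(\eta_{k_1},\eta_{k_2})|\le 1/4$. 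You state this last point explicitly only for the first moment; you should add the analogous one-line counting bound in the variance estimate to make the "well-separated part is $o(1)$" claim airtight. With that small addition the argument is complete and fully equivalent in strength to the paper's.
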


From \eqref{eq:4.E.1}, \eqref{eq:4.E.2}, \eqref{eq:2.5} and \eqref{eq:2.6},
the limit $\psi$ satisfies
\begin{equation} \label{eq:2.5-3}
\psi(-1)=2\rho, \quad \psi(1)=0, \quad \int_{-1}^1\psi(x)dx =2\rho +4m
\end{equation}
and has a slope $\psi'(x) = -\b(x)$.  Note that \eqref{eq:2.5-3} is
consistent with \eqref{eq:2.5-1}.  As we have seen in \cite{FS-1},
Theorem 2.2, the Vershik curve $\psi_R(x)$ defined for $x\ge 0$ in the 
restricted uniform statistics having the area $\int_0^\infty\psi_R(x)dx =1$
satisfies the ordinary differential equation
\begin{equation}\label{eq:Vershik}
\psi_R'' + c \psi_R'(1+\psi_R') =0,
\end{equation}
where $c=\pi/\sqrt{12}$.  Here we consider in a rectangular box, and
a simple computation with the help of \eqref{eq:2.5-2} shows that the
limit $\psi$ in Corollary \ref{cor:2.2} satisfies the same ordinary 
differential equation \eqref{eq:Vershik} with $c= -b$.
Further discussions on the Vershik curves will be held in Section 5.

\section{Local limit theorem for inhomogeneous Bernoulli sequence with
unbounded weight}

For the proof of Theorem \ref{thm:4.E.1}, we need to establish the local
limit theorem jointly for the sum of inhomogeneous Bernoulli sequence and the
sum with unbounded linearly growing weights $k$ having some defects in them.
Let a continuous function $\a=\a(\cdot): [0,1] \to (0,1)$ and a sequence
$\{\a_k^n \in (0,1)\}_{k=1}^n$ satisfying the condition
\begin{equation} \label{eq:3.alpha}
\lim_{n\to\infty} \max_{1\le k \le n} |\a_k^n -\a(k/n)| =0,
\end{equation}
be given.  Let $\{X_k\}_{k=1}^n \equiv \{X_k^n\}_{k=1}^n$
be $\{0,1\}$-valued independent random variables
with mean $E[X_k]=\a_k^n$ for $1\le k \le n$ and $n\in \N$.  Note that 
$\a_-\le \a(x), \, \a_k^n \le \a_+, x\in [0,1], 1\le k \le n, n\in \N$ holds
with some $0<\a_-<\a_+<1$.  We 
assume that a subset $\Ga_n$ of $\{1,2,\ldots,n\}$ is given for each $n\in\N$
and the size $|\Ga_n|$ is uniformly bounded in $n$: $|\Ga_n|\le C$ for all $n$.
Under these settings, we consider the sums
\begin{align}\label{eq:3.S}
S_n \equiv S_n(X) := \sum_{k\in \Ga_n^c} X_k,\quad
T_n \equiv  T_n(X) := \sum_{k\in \Ga_n^c} kX_k,
\end{align}
for $n\in\N$, where $\Ga_n^c = \{1,2,\ldots,n\} \setminus \Ga_n$;
$\Ga_n$ are defects in these sums.  Then, it is easy to see that the (joint)
central limit theorem holds for $(S_n,T_n)$.  Indeed, we define
\begin{align*}
\tilde{S}_n := \frac1{\sqrt{U_n}} \big(S_n - E_n \big), \quad
\tilde{T}_n := \frac1{\sqrt{V_n}} \big(T_n - F_n \big),
\end{align*}
with
\begin{align*}
E_n = \sum_{k\in \Ga_n^c} \a_k^n, \quad
F_n = \sum_{k\in \Ga_n^c} k\a_k^n, \quad
U_n = \sum_{k\in \Ga_n^c} v_k^n, \quad
V_n = \sum_{k\in \Ga_n^c} k^2 v_k^n,
\end{align*}
where $v_k^n :=\a_k^n(1-\a_k^n)$.  Recalling that $\a(\cdot)\in C([0,1])$,
the condition \eqref{eq:3.alpha} and $|\Ga_n|\le C$, the asymptotic behaviors
of $E_n, F_n, U_n$ and $V_n$ as $n\to\infty$ are given by
\begin{align} \label{eq:3.0}
&  E_n=n(\bar{\a}+o(1)), \quad F_n=n^2(\check{\a}+o(1)), \\
&  U_n=n(\bar{v}+o(1)), \quad V_n=n^3(\check{v}+o(1)),  \notag
\end{align}
where $\bar{\a}, \check{\a}, \bar{v}, \check{v}$ are positive constants
defined by
\begin{align*}
& \bar{\a}= \int_0^1\a(x)dx, \quad
\check{\a}= \int_0^1 x\a(x)dx, \\
& \bar{v}= \int_0^1\a(x)(1-\a(x))dx, \quad
\check{v}=\int_0^1x^2\a(x)(1-\a(x))dx,
\end{align*}
respectively.  Then $(\tilde{S}_n,\tilde{T}_n)$ weakly converges to 
$(Y_1,Y_2)$ as $n\to\infty$, where $Y=(Y_1,Y_2)$ is an $\R^2$-valued 
Gaussian random variable with mean $0$ and $E[Y_1^2]=E[Y_2^2]=1$, 
$E[Y_1Y_2]= \la$, where
$$
\la = \frac1{\sqrt{\bar{v}\check{v}}} \int_0^1 x\a(x)(1-\a(x)) dx.
$$
Indeed, the convergence of the corresponding characteristic functions
is shown by Lemma \ref{lem:6.1} below.  
Note that $\la\in (0,1)$ by Schwarz's inequality.
The joint distribution density function of $Y$ is given by
\begin{equation} \label{eq:4.E.4}
q_0(y) = \frac1{2\pi\sqrt{1-\la^2}} \exp \left\{ - 
\frac{y_1^2 - 2\la y_1y_2+y_2^2}{2(1-\la^2)} \right\}, \quad 
y = (y_1,y_2) \in \R^2.
\end{equation}

We now state the corresponding local limit theorem.  The set of all
possible values of $(S_n,T_n)$ is denoted by
$$
\mathcal{P}_n := \{(K,L) \in \Z_+\times\Z_+; P(S_n=K, T_n=L)>0\},
$$
where $\Z_+ = \{0,1,2,\ldots\}$.

\begin{prop} \label{lem:4.E.2}
We have that
$$
\lim_{n\to\infty} \sup_{(K,L)\in \mathcal{P}_n} \left| \sqrt{U_nV_n} 
\, P(S_n=K, T_n=L) - q_0(y_1,y_2) \right| =0,
$$
where
\begin{equation} \label{eq:4.E.5}
y_1 = \frac1{\sqrt{U_n}} (K-E_n), \quad
y_2 = \frac1{\sqrt{V_n}} (L-F_n).
\end{equation}
\end{prop}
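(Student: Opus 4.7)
The plan is to apply the Fourier inversion formula for integer-valued random vectors. Since $(S_n,T_n)\in\Z^2$, we have
\[
P(S_n=K,\,T_n=L) = \frac{1}{(2\pi)^2}\iint_{[-\pi,\pi]^2} \phi_n(\theta_1,\theta_2)\, e^{-i(K\theta_1+L\theta_2)}\, d\theta_1 d\theta_2,
\]
where $\phi_n(\theta_1,\theta_2) := E[e^{i(\theta_1 S_n + \theta_2 T_n)}] = \prod_{k\in\Ga_n^c}\bigl(1+\a_k^n(e^{i(\theta_1+k\theta_2)}-1)\bigr)$. Changing variables $\theta_1 = \tilde\theta_1/\sqrt{U_n}$ and $\theta_2 = \tilde\theta_2/\sqrt{V_n}$, multiplying through by $\sqrt{U_n V_n}$, and writing $K = E_n+\sqrt{U_n}\, y_1$, $L = F_n + \sqrt{V_n}\, y_2$ so that $(y_1,y_2)$ is as in \eqref{eq:4.E.5}, the task reduces to showing
\[
\iint_{R_n} \Psi_n(\tilde\theta_1,\tilde\theta_2)\, e^{-i(y_1\tilde\theta_1+y_2\tilde\theta_2)}\,d\tilde\theta_1 d\tilde\theta_2 \longrightarrow \iint_{\R^2} e^{-\frac12(\tilde\theta_1^2 - 2\la\tilde\theta_1\tilde\theta_2+\tilde\theta_2^2)}\, e^{-i(y_1\tilde\theta_1+y_2\tilde\theta_2)}\, d\tilde\theta_1 d\tilde\theta_2
\]
uniformly in $(K,L)\in\mathcal{P}_n$, where $R_n = [-\pi\sqrt{U_n},\pi\sqrt{U_n}]\times[-\pi\sqrt{V_n},\pi\sqrt{V_n}]$ and $\Psi_n$ denotes the recentered and rescaled characteristic function. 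Fourier inversion of the Gaussian on the right reproduces $(2\pi)^2 q_0(y_1,y_2)$ via \eqref{eq:4.E.4}.

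I would split $R_n$ into a central region $C_A := \{|\tilde\theta_1|\le A,\,|\tilde\theta_2|\le A\}$ (for arbitrary but fixed $A>0$) and its complement. On $C_A$, a Taylor expansion of $\log\bigl(1+\a_k^n(e^{i\omega_k}-1)\bigr)$ in $\omega_k := \theta_1+k\theta_2$ yields
\[
\log\phi_n(\theta_1,\theta_2) = i(\theta_1 E_n + \theta_2 F_n) - \tfrac12\Bigl(\theta_1^2 U_n + 2\theta_1\theta_2\!\!\sum_{k\in\Ga_n^c}\!\! k v_k^n + \theta_2^2 V_n\Bigr) + r_n(\theta_1,\theta_2).
\]
After rescaling, the quadratic part converges to the desired Gaussian exponent, the cross-term giving exactly $\la$ thanks to \eqref{eq:3.0} and its definition. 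The cubic remainder satisfies $|r_n|\le C\sum_{k\in\Ga_n^c}|\omega_k|^3$, which in the new variables is of order $(|\tilde\theta_1|+|\tilde\theta_2|)^3 n^{-1/2}$ and vanishes uniformly on $C_A$. Pointwise convergence of the integrand on $C_A$, together with the Gaussian upper bound coming from the quadratic part, gives convergence of the central portion by dominated convergence, uniformly in the bounded set of admissible $(y_1,y_2)$.

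The main obstacle is the tail $R_n\setminus C_A$, where classical moment-based arguments fail because of the unbounded weights $k$. I would use the deterministic modulus bound
\[
|\phi_n(\theta_1,\theta_2)|^2 = \prod_{k\in\Ga_n^c}\bigl(1-2v_k^n(1-\cos(\theta_1+k\theta_2))\bigr) \le \exp\Bigl(-c\!\!\sum_{k\in\Ga_n^c}\!\!(1-\cos(\theta_1+k\theta_2))\Bigr),
\]
with $c := 2\a_-(1-\a_+)>0$. Here the linearity of the weight is essential: $\{\theta_1+k\theta_2\bmod 2\pi\}$ is an arithmetic progression, so a case analysis on the scale of $|\theta_2|$ --- comparable to $1/n$, between $1/n$ and a small fixed $\de$, or larger up to $\pi$ --- produces in each regime a lower bound for $\sum_{k=1}^n(1-\cos(\theta_1+k\theta_2))$ that exceeds any constant multiple of $\log n$. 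This makes $|\phi_n|$ decay faster than any polynomial in $n$ and thereby beats the prefactor $\sqrt{U_n V_n}\sim n^2$ when integrated over the tail. The assumption $|\Ga_n|\le C$ ensures that removing the defects from the sum cannot damage this lower bound. Establishing these case-by-case trigonometric estimates cleanly is the technical heart of the argument; once they are in hand, combining the central convergence with the tail smallness yields the claimed uniform limit.
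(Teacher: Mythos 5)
Your overall strategy coincides with the paper's: Fourier inversion on $[-\pi,\pi]^2$, a Taylor expansion of $\log\phi_n$ near the origin producing the Gaussian with correlation $\la$, and a tail bound on $|\phi_n|$ that exploits the fact that $\{\theta_1+k\theta_2\}_{k}$ is an arithmetic progression, via a case analysis on the size of $|\theta_2|$ (the paper's three regions $E_{2,n}^{(1)},E_{2,n}^{(2)},E_{2,n}^{(3)}$ behind \eqref{eq:6.13}). The identity $|1+\a_k^n(e^{i\om}-1)|^2=1-2v_k^n(1-\cos\om)$ you use is an equivalent form of the paper's bound $|\a_k e^{iu}+(1-\a_k)|\le r<1$ for $u$ bounded away from $0$ mod $2\pi$.

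There is, however, one step that fails as written. You cut at a \emph{fixed} box $C_A$ and claim that on all of $R_n\setminus C_A$ the sum $\sum_k(1-\cos(\theta_1+k\theta_2))$ exceeds any constant multiple of $\log n$, so that $|\phi_n|$ is super-polynomially small and beats the prefactor $\sqrt{U_nV_n}\sim n^2$. This is false on the intermediate zone where $\max(|\tilde\theta_1|,|\tilde\theta_2|)$ exceeds $A$ but is still $o(\sqrt n)$: take for instance $\tilde\theta_2=0$ and $\tilde\theta_1=2A$, so that $\theta_1=2A/\sqrt{U_n}\to0$ and $\sum_{k}(1-\cos\theta_1)\sim n\theta_1^2/2\to 2A^2/\bar{v}$, a constant independent of $n$; there $\sqrt{U_nV_n}\,|\phi_n|$ does not even tend to $0$ pointwise. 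The repair is exactly what the paper does: extend the central region to $\{|\tilde\theta_1|,|\tilde\theta_2|\le c\sqrt n\}$ for a small $c$, where the expansion of $\log\phi_n$ remains valid and gives the quantitative estimate of Lemma \ref{lem:6.1}, namely $|\Psi_n-e^{-(\tilde\theta_1^2+2\la\tilde\theta_1\tilde\theta_2+\tilde\theta_2^2)/2}|\le \de(|\tilde\theta_1|^3+|\tilde\theta_2|^3+\tilde\theta_1^2+\tilde\theta_2^2)e^{-c_2(\tilde\theta_1^2+\tilde\theta_2^2)}$, so the central error is $O(\de)$ after integration; only beyond that scale, i.e.\ for $|\theta_1|\ge c_4$ or $|\theta_2|\ge c_5/n$, do the arithmetic-progression counts give a positive fraction $\k n$ of indices with $\theta_1+k\theta_2$ bounded away from $0$ mod $2\pi$, hence $|\phi_n|\le r^{\k n}$. (Alternatively, keep $C_A$ but insert the annulus $A<\max(|\tilde\theta_1|,|\tilde\theta_2|)\le c\sqrt n$, on which $1-\cos x\ge 2x^2/\pi^2$ yields $|\phi_n|\le e^{-c'(\tilde\theta_1^2+\tilde\theta_2^2)}$ and the contribution is $o_A(1)$ uniformly in $n$; then send $A\to\infty$ after $n\to\infty$.) A further small correction: the supremum in the Proposition runs over all of $\mathcal{P}_n$, so $(y_1,y_2)$ is \emph{not} confined to a bounded set; the uniformity you need comes instead from the fact that every estimate in the argument is independent of $(y_1,y_2)$ because $|e^{-i(y_1\tilde\theta_1+y_2\tilde\theta_2)}|=1$.
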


\begin{rem}
If $\a(\cdot)\in C^1([0,1])$ and $\a_k^n$ are given by $\a_k^n = \a(k/n)$,
then the convergence in Proposition \ref{lem:4.E.2} takes place with speed
$O(1/\sqrt{n})$:
$$
\sup_{(K,L)\in \mathcal{P}_n} \left| \sqrt{U_nV_n} 
\, P(S_n=K, T_n=L) - q_0(y_1,y_2) \right| \le \frac{C}{\sqrt{n}},
$$
with some $C>0$.  However, in Section 4, we are forced to consider more general
$\a_k^n$ satisfying the condition \eqref{eq:3.alpha}. 
See Remark \ref{rem:4.1} below.
\end{rem}

The local limit theorem for $T_n$ was studied by \cite{H} in homogeneous
Bernoulli case without defects and we extend it to the joint variables
$(S_n,T_n)$ in inhomogeneous case with defects. The rest of this section
is devoted to the proof of Proposition \ref{lem:4.E.2}.  We essentially 
follow the arguments in \cite{Pe}, but because of the unboundedness of 
the weights $k$ for $T_n$, a phenomenon different from the classical 
situation arises in the Fourier mode especially for the term $I_2$ 
introduced below.

Let $f(s,t;S_n,T_n), s, t \in \R,$ be the characteristic
function of $\R^2$-valued random variable $(S_n,T_n)$:
$$
f(s,t;S_n,T_n) = E\left[ e^{i(sS_n+tT_n)}\right],
$$
where $i=\sqrt{-1}$.  Then, for all $(K,L)\in \mathcal{P}_n$, we have that
\begin{align*}
(2\pi)^2 P(S_n=K, T_n=L)
& = \int_{-\pi}^\pi \int_{-\pi}^\pi e^{-i(sK+tL)} f(s,t;S_n,T_n)\, dsdt \\
& = \frac1{\sqrt{U_nV_n}} \int_{-\sqrt{U_n}\pi}^{\sqrt{U_n}\pi}
 \int_{-\sqrt{V_n}\pi}^{\sqrt{V_n}\pi} e^{-i(sy_1+ty_2)} 
 f(s,t;\tilde{S}_n,\tilde{T}_n)\, dsdt,
\end{align*}
where $y_1, y_2$ are defined by \eqref{eq:4.E.5}.  The second equality
is due to a change of variables noting that
\begin{equation} \label{eq:6.0}
f(s,t;S_n,T_n)
= \exp\{ i (sE_n + tF_n)\}
f(\sqrt{U_n}s,\sqrt{V_n}t;\tilde{S}_n,\tilde{T}_n).
\end{equation}
Thus, if we define the error term by
$$
R_n(K,L) = (2\pi)^2 \{\sqrt{U_nV_n} \, P(S_n=K, T_n=L) - q_0(y_1,y_2)\},
$$
it can be rewritten as
\begin{align*}
R_n(K,L) =& \int_{-\sqrt{U_n}\pi}^{\sqrt{U_n}\pi}
 \int_{-\sqrt{V_n}\pi}^{\sqrt{V_n}\pi} e^{-i(sy_1+ty_2)} 
 f(s,t;\tilde{S}_n,\tilde{T}_n)\, dsdt  \\
 & - \int_{\R^2} e^{-i(sy_1+ty_2)} e^{-(s^2+2\la st+t^2)/2} \, dsdt,
\end{align*}
since $(2\pi)^2 q_0(y_1,y_2)$ is given by the second integral in the above
expression.  We divide $R_n(K,L)$ into the sum of three integrals:
$$
R_n(K,L) = I_1+I_2+I_3,
$$
where
\begin{align*}
I_1 & = \int_{D_{1,n}} e^{-i(sy_1+ty_2)} \left\{
 f(s,t;\tilde{S}_n,\tilde{T}_n) -  e^{-(s^2+2\la st+t^2)/2} 
 \right\} dsdt, \\
I_2 & = \int_{D_{2,n}} e^{-i(sy_1+ty_2)} f(s,t;\tilde{S}_n,\tilde{T}_n)
  \, dsdt,  \\
I_3 & = \int_{D_{3,n}} e^{-i(sy_1+ty_2)} 
 e^{-(s^2+2\la st+t^2)/2} \, dsdt,
\end{align*}
respectively.  Three domains are defined by
\begin{align*}
D_{1,n} & = \{(s,t)\in\R^2; |s|, |t| \le c\sqrt{n}\}, \\
D_{2,n} & = \{(s,t)\in\R^2; c\sqrt{n}<|s| \le \sqrt{U_n}\pi
\; \text{ or }\; c\sqrt{n}<|t| \le \sqrt{V_n}\pi\}, \\
D_{3,n} & = \{(s,t)\in\R^2; |s|\ge c\sqrt{n} \;\text{ or }\;
 |t| \ge c\sqrt{n}\},
\end{align*}
respectively, with a small enough $c \in (0, \sqrt{\bar{v}}\pi)$ chosen later.

The estimate on $I_3 \equiv I_{3,n}(K,L)$ is easy.  In fact, noting that
$s^2+2\la st+t^2 \ge (1-\la)(s^2+t^2)$ and $\la<1$, we have a uniform bound
on $I_3$:  There exist $C_1, c_1>0$ such that
\begin{equation} \label{eq:6.1}
\sup_{K,L} |I_3| \le C_1 e^{-c_1 n}.
\end{equation}

To give the estimate on $I_1$, we prepare a lemma which is a 
two-dimensional version of Lemma 1 in Chapter V of \cite{Pe}, p.109.

\begin{lem} \label{lem:6.1}
For every $\de>0$, there exist $n_0 \in\N$, $c_2, c_3>0$ such that,
if $n\ge n_0$, 
$$
\left|
 f(s,t;\tilde{S}_n,\tilde{T}_n) -  e^{-(s^2+2\la st+t^2)/2}  \right|
\le \de (|s|^3+|t|^3+s^2+t^2) e^{-c_2(s^2+t^2)}
$$
holds for every $(s,t)\in \R^2$ satisfying $|s|, |t| \le c_3\sqrt{n}$.
\end{lem}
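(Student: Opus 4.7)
The plan is to factor the characteristic function using independence, perform a uniform Taylor expansion of each complex logarithm about $a=0$, and compare with the target Gaussian. By independence,
$$f(s,t;\tilde S_n,\tilde T_n) = \prod_{k\in\Ga_n^c} g_k(a_k), \qquad a_k := \frac{s}{\sqrt{U_n}} + \frac{kt}{\sqrt{V_n}},$$
where $g_k(a) := E\bigl[e^{ia(X_k-\a_k^n)}\bigr]$ is the characteristic function of the centered Bernoulli variable $X_k-\a_k^n$. Since $v_k^n=\a_k^n(1-\a_k^n)$ is bounded away from $0$ and $1$ and the higher moments are uniformly controlled, there exist $a_0,C>0$ (depending only on $\a_-,\a_+$) such that
$$\log g_k(a) = -\tfrac12 v_k^n a^2 + c_k(a), \qquad |c_k(a)|\le C|a|^3 \ \text{ for all } |a|\le a_0.$$
By \eqref{eq:3.0}, $U_n\sim n\bar v$ and $V_n\sim n^3\check v$, so on the Fourier square $|s|,|t|\le c_3\sqrt n$ we have $|a_k|\le C_0 c_3$ uniformly in $k\le n$; after shrinking $c_3$, the Taylor expansion is applicable to every factor.

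The quadratic sum is computed explicitly. Since $\sum_{k\in\Ga_n^c}v_k^n/U_n=\sum_{k\in\Ga_n^c}k^2 v_k^n/V_n=1$ by definition,
$$\tfrac12\sum_{k\in\Ga_n^c} v_k^n a_k^2 = \tfrac12(s^2+2\la_n st+t^2), \qquad \la_n:=\frac{1}{\sqrt{U_nV_n}}\sum_{k\in\Ga_n^c}k v_k^n,$$
and a Riemann-sum argument based on \eqref{eq:3.alpha} and $|\Ga_n|\le C$ gives $\la_n\to\la$. The cubic sum is bounded by convexity as
$$\sum_{k\in\Ga_n^c}|a_k|^3 \le 4\sum_{k=1}^n\Bigl(\frac{|s|^3}{U_n^{3/2}}+\frac{k^3|t|^3}{V_n^{3/2}}\Bigr) \le \frac{C_1}{\sqrt n}\bigl(|s|^3+|t|^3\bigr),$$
the key cancellation being $\sum_{k=1}^n k^3\sim n^4/4$ against $V_n^{3/2}\sim n^{9/2}$. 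Writing $r_n(s,t):=\log f(s,t;\tilde S_n,\tilde T_n)+\tfrac12(s^2+2\la st+t^2)$, these estimates combine into
$$|r_n(s,t)| \le |\la_n-\la|(s^2+t^2) + \frac{CC_1}{\sqrt n}\bigl(|s|^3+|t|^3\bigr) \le \de\bigl(|s|^3+|t|^3+s^2+t^2\bigr)$$
for $n$ large enough that both $|\la_n-\la|$ and $CC_1/\sqrt n$ are less than $\de$.

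To conclude, I would shrink $c_3$ once more so that $|r_n|\le \tfrac{1-\la}{4}(s^2+t^2)$ throughout the square; since $|s|^3+|t|^3\le c_3\sqrt n\,(s^2+t^2)$ there, both ingredients of $r_n$ are dominated by a small multiple of $(s^2+t^2)$, making this refinement automatic for small $c_3$ and large $n$. Using the elementary bound $|e^z-1|\le |z|e^{|z|}$ and $s^2+2\la st+t^2\ge(1-\la)(s^2+t^2)$,
$$\bigl|f(s,t;\tilde S_n,\tilde T_n)-e^{-(s^2+2\la st+t^2)/2}\bigr| = e^{-(s^2+2\la st+t^2)/2}\bigl|e^{r_n}-1\bigr| \le |r_n|\,e^{-(1-\la)(s^2+t^2)/4},$$
giving the stated bound with $c_2:=(1-\la)/4$. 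The main obstacle is precisely the cubic remainder: linearly growing weights $k\le n$ could naively make the third-moment sum blow up on the Fourier square of radius $\sqrt n$, but the normalization $V_n^{3/2}\sim n^{9/2}$ beats $\sum k^3\sim n^4$ by exactly a factor $\sqrt n$, giving the uniform control needed on the full square — this is the place where the linear structure of the weight is genuinely used.
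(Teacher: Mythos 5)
Your proposal is correct and follows essentially the same route as the paper: factor the characteristic function over $k\in\Ga_n^c$, Taylor-expand the logarithm of each factor to isolate the quadratic form $\tfrac12(s^2+2\la_n st+t^2)$ (exact by the definitions of $U_n,V_n$), bound the cubic remainder by $O(n^{-1/2})(|s|^3+|t|^3)$ using $\sum_{k\le n}k^3/V_n^{3/2}=O(n^{-1/2})$, and finish with $|e^{r}-1|\le|r|e^{|r|}$ after shrinking $c_3$ so that $|r|$ is dominated by a small multiple of $s^2+t^2$. The only cosmetic difference is that you invoke the uniform cubic bound on $\log g_k$ directly, whereas the paper expands $g_k$ itself and then controls the extra terms $\sum v_k^2|\ga_k|^4$ and $\sum|R_k|^2$ arising from $|\log(1+z)-z|\le C|z|^2$ separately; both are standard and the key cancellation you highlight is exactly the one the paper relies on.
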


\begin{proof}
A simple computation leads to
\begin{equation} \label{eq:6.2}
f(s,t;\tilde{S}_n,\tilde{T}_n) = \prod_{k\in \Ga_n^c}
 \left\{\a_k e^{i\ga_k(1-\a_k)}+(1-\a_k) e^{-i\ga_k\a_k}\right\},
\end{equation}
where $\ga_k \equiv \ga_k^n(s,t) := s/\sqrt{U_n} +tk/\sqrt{V_n}$ and
we simply write $\a_k$ instead of $\a_k^n$.
If $|s|, |t| \le c_3\sqrt{n}$ with $c_3>0$ chosen later, from \eqref{eq:3.0},
$\ga_k$ can be estimated as
\begin{equation} \label{eq:6.3}
|\ga_k| \le \bar{c} c_3,
\end{equation}
with some $\bar{c}>0$.  Therefore,
since Taylor's formula implies
$$
\left| e^z - \left(1+z+\frac12 z^2\right)\right| \le C_4 |z|^3,
$$
with some $C_4>0$ for all $z\in\mathbb{C}: |z| \le \bar{c}c_3$,
we have that
\begin{align} \label{eq:6.4}
& \a_k e^{i\ga_k(1-\a_k)}+(1-\a_k) e^{-i\ga_k\a_k} \\
& \quad = \a_k \big( 1 + i\ga_k(1-\a_k) -\frac12 \ga_k^2(1-\a_k)^2\big)
 + (1-\a_k) \big( 1 - i\ga_k\a_k -\frac12 \ga_k^2\a_k^2\big) + R_k \notag  \\
& \quad = 1- \frac{v_k}2 \ga_k^2 + R_k,  \notag
\end{align}
with an error term $R_k \in\mathbb{C}$ having an estimate:
\begin{equation} \label{eq:6.5}
|R_k| \le C_4 \big(\a_k(1-\a_k)^3 |\ga_k|^3 + (1-\a_k)\a_k^3 |\ga_k|^3 \big)
\le C_5 |\ga_k|^3,
\end{equation}
where we write  $v_k$ for $v_k^n$.
Let $\log(1+z)$ be the principal value defined for $z\in \mathbb{C}
\setminus \{z=x\in\R; x\le -1\}$.  Then,
$$
|\log(1+z) -z| \le C_6 |z|^2
$$
holds for $|z|< 1/2$ with some $C_6>0$, and therefore, from \eqref{eq:6.2}
and \eqref{eq:6.4}, we have that
\begin{equation} \label{eq:6.6}
\left| \log f(s,t;\tilde{S}_n,\tilde{T}_n) - \sum_{k\in\Ga_n^c}
  \big(-\frac{v_k}2 \ga_k^2 + R_k\big) \right| \le C_6 \sum_{k\in\Ga_n^c}
    \left|-\frac{v_k}2 \ga_k^2 + R_k \right|^2,
\end{equation}
if $|-v_k \ga_k^2/2 + R_k |<1/2$ for all $1\le k \le n$.
Note that, from \eqref{eq:6.3} and \eqref{eq:6.5}, the last condition
holds if we choose $c_3>0$ sufficiently small.  However, we see that
\begin{equation} \label{eq:3.a}
\sum_{k\in\Ga_n^c} v_k \ga_k^2 
=  (s^2+2\la st+t^2)(1+o(1)),
\end{equation}
by using \eqref{eq:3.0} and recalling $|\Ga_n|\le C$.  We thus obtain that
\begin{equation} \label{eq:6.7}
\log f(s,t;\tilde{S}_n,\tilde{T}_n) = -\frac12 (s^2+2\la st+t^2) +R,
\end{equation}
with an error term $R$ having a bound:
$$
|R| \le \sum_{k=1}^n|R_k| + 2C_6 \left( \frac14 \sum_{k=1}^nv_k^2|\ga_k|^4
+ \sum_{k=1}^n|R_k|^2\right) + o(1) (s^2+t^2).
$$
The sum over $\Ga_n^c$ is bounded by that over all $1\le k \le n$.
However, from \eqref{eq:6.5} and \eqref{eq:3.0}, we have that
\begin{equation} \label{eq:6.8}
\sum_{k=1}^n|R_k| \le C_5 \sum_{k=1}^n|\ga_k|^3
\le \frac{C_7}{\sqrt{n}} (|s|^3+|t|^3),
\end{equation}
with some $C_7>0$.  For the sum of $v_k^2|\ga_k|^4$, since $v_k^2\le (1/4)^2$,
we have that
\begin{align} \label{eq:6.9}
\sum_{k=1}^nv_k^2|\ga_k|^4 & \le C_8 \sum_{k=1}^n
  \left(\frac{|s|^4}{U_n^2} + \frac{|t|^4k^4}{V_n^2} \right) \\
& \le \frac{C_{9}}{n} (|s|^4+|t|^4)
\le \frac{C_{9}c_3}{\sqrt{n}} (|s|^3+|t|^3),  \notag
\end{align}
with some $C_8, C_{9}>0$, if $|s|, |t| \le c_3\sqrt{n}$.  Since \eqref{eq:6.3}
and \eqref{eq:6.5} show that $|R_k|$ is bounded, \eqref{eq:6.8} implies that
\begin{equation} \label{eq:6.10}
\sum_{k=1}^n|R_k|^2\le \frac{C_{10}}{\sqrt{n}} (|s|^3+|t|^3).
\end{equation}
Therefore, \eqref{eq:6.8}--\eqref{eq:6.10} can be summarized into
\begin{align*}
|R| \le \frac{C_{11}}{\sqrt{n}} (|s|^3+|t|^3) + o(1) (s^2+t^2),
\end{align*}
if $|s|, |t| \le c_3\sqrt{n}$.
Coming back to \eqref{eq:6.7}, we have that
\begin{align*}
\left|
 f(s,t;\tilde{S}_n,\tilde{T}_n) -  e^{-(s^2+2\la st+t^2)/2}  \right|
& = e^{-(s^2+2\la st+t^2)/2}  |e^R-1|  \\
& \le e^{-(1-\la)(s^2+t^2)/2} |R| |e^{|R|}.
\end{align*}
However, if $|s|, |t| \le c_3\sqrt{n}$, $|R| \le (C_{11}c_3+o(1)) (s^2+t^2)$
and therefore, by choosing $c_3>0$ sufficiently small and $n_0$ sufficiently
large, we have that
$$
e^{-(1-\la)(s^2+t^2)/2} e^{|R|} \le C_{12} e^{-c_2(s^2+t^2)},
$$
with some $c_2, C_{12}>0$ for every $n\ge n_0$. We have thus completed the
proof of the lemma by changing the choice of $n_0$ to bound $|R|$ by means
of the given $\de>0$, if necessary.
\end{proof}

Lemma \ref{lem:6.1} gives a uniform estimate on $I_1\equiv I_{1,n}(K,L)$
under the choice of $c=c_3$: For every $\de>0$,
\begin{equation} \label{eq:6.11}
\sup_{K,L} |I_1| \le \de \int_{\R^2} (|s|^3+|t|^3 +s^2+t^2)
 e^{-c_2(s^2+t^2)} \, dsdt,
\end{equation}
holds if $n\ge n_0$; note that the last integral is converging.

Finally, we give an estimate on $I_2\equiv I_{2,n}(K,L)$.
Using the relation \eqref{eq:6.0}, $I_2$ can be rewritten and estimated as
\begin{equation} \label{eq:6.12}
|I_2| \le \sqrt{U_nV_n} \int_{E_{2,n}} |f(s,t;S_n,T_n)| \, dsdt,
\end{equation}
where $E_{2,n} = \{(s,t); c_4\le |s| \le \pi \;\text{ or }\;
c_5/n\le |t| \le \pi\}$ and $c_4=c\, \inf_{n\in\N}\sqrt{n/U_n}$,
$c_5=c\, \inf_{n\in\N}\sqrt{n^3/V_n} >0$.  Once we can show that there
exist sufficiently small $\th>0$ and $\k\in (0,1)$ such that
\begin{equation} \label{eq:6.13}
\sharp \{k; 1\le k \le n, k\notin \Ga_n, s+kt\notin [-\th,\th] \;\text{ mod }
\; 2\pi\}\ge \k n-1 -|\Ga_n|,
\end{equation}
for all $(s,t)\in E_{2,n}$, then we have that
$$
|f(s,t;S_n,T_n)| = \left|\prod_{k\in \Ga_n^c} \{\a_k e^{i(s+kt)} 
+ (1-\a_k)\}\right| \le r^{\k n-1-C},
$$
where $\a_k = \a_k^n$ and
$$
r := \max_{u: \th\le |u|\le \pi} \max_{k\in \Ga_n^c}|\a_k e^{iu}+(1-\a_k)| < 1,
$$
by recalling $0<\a_-\le \a_k\le \a_+<1$.  This together with 
\eqref{eq:6.12} and \eqref{eq:3.0} proves
\begin{align} \label{eq:6.14}
\sup_{K,L} |I_2| & \le (2\pi)^2 \sqrt{U_n V_n} r^{\k n-1-C} \\
& \le C' n^2 r^{\k n-1-C}.  \notag
\end{align}
Three uniform estimates \eqref{eq:6.1}, \eqref{eq:6.11} and \eqref{eq:6.14}
conclude the proof of Proposition \ref{lem:4.E.2}.

The final task is to show \eqref{eq:6.13} for all $(s,t)\in E_{2,n}$.  
To this end, we may assume $t\ge 0$ by symmetry.  We may also assume
$\Ga_n=\emptyset$ and prove \eqref{eq:6.13} without $-|\Ga_n|$ in the
right hand side.  We rewrite the
region $E_{2,n}\cap\{t\ge 0\}$ into a union of three regions: 
$E_{2,n}\cap\{t\ge 0\} = E_{2,n}^{(1)}\cup E_{2,n}^{(2)}\cup E_{2,n}^{(3)}$,
where
\begin{align*}
E_{2,n}^{(1)} & = \{(s,t); 2\th \le t \le \pi, |s|\le \pi\},\\
E_{2,n}^{(2)} & = \{(s,t); \frac{c_5}n\le t<2\th, |s|\le \pi\},\\
E_{2,n}^{(3)} & = \{(s,t); 0\le t <\frac{c_5}n, c_4\le |s|\le \pi\}.
\end{align*}
Note that $c (=c_3)$ and therefore $c_5$ was chosen
sufficiently small so that we may assume $0<c_5<2\pi$.  For $(s,t) \in
E_{2,n}^{(1)}$, since $\lq\lq s+kt\in [-\th,\th]$ mod $2\pi$" implies
$\lq\lq s+(k+1)t\notin [-\th,\th]$ mod $2\pi$", it is obvious that
\eqref{eq:6.13} holds for $\Ga_n=\emptyset$ and $\k = 1/2$.
Now we take $(s,t) \in E_{2,n}^{(2)}$.  The $n$ points $\{kt\}_{n=1}^n$
are arranged on $\R$ in an equal distance and the interval $[t,nt]$
containing all these points are covered by at most $m:= [(n-1)t/(2\pi)]+1$
disjoint intervals of length $2\pi$, where $[\;\;]$ means the integer part.
However, for an arbitrary interval $I\subset \R$ of length $2\pi$,
$$
\sharp\{k; 1\le k \le n, kt\in I, s+kt\in [-\th,\th] \;\text{ mod }\; 2\pi\}
\le \frac{2\th}t +1.
$$
Thus, since $c_5/n\le t <2\th$ for $(s,t) \in E_{2,n}^{(2)}$, we have that
\begin{align*}
& \sharp\{k; 1\le k \le n, s+kt\in [-\th,\th] \;\text{ mod }\; 2\pi\}
 \le m \left(\frac{2\th}t +1 \right)
\le \left(\frac{nt}{2\pi} +1 \right)\left(\frac{2\th}t +1 \right) \\
& \qquad\qquad = \frac{\th}{\pi} n + \frac{t}{2\pi}n + \frac{2\th}t +1
\le \frac{2\th}{\pi} n + \frac{2\th}{c_5}n +1
\le \frac{n}2+1,
\end{align*}
by choosing $\th: 0<\th<(1/\pi+1/c_5)^{-1}/4$, and this proves \eqref{eq:6.13}
for $\Ga_n=\emptyset$ and $\k = 1/2$.
Finally we take $(s,t) \in E_{2,n}^{(3)}$.  Then, since $0<c_5<2\pi$
and $0\le t \le c_5/n$, $n$ points $\{kt\}_{k=1}^n$ are all located in the
interval $[0,2\pi)$.  Now choose $\th: 0<\th<c_4/8$.  Then, recalling that
$c_4\le |s| \le \pi$, for example in the case that $-\pi\le s \le -c_4$,
\begin{align*}
& \{k; 1\le k \le n, s+kt\in [-\th,\th] \;\text{ mod }\; 2\pi\}
 = \{k; 1\le k \le n, s+kt\in [-\th,\th] \}  \\
& \qquad\qquad \subset \{k; 1\le k \le n, kt\ge -\th-s\}
\subset \{k; 1\le k \le n, k\ge \frac{7c_4}{8c_5}n \}.
\end{align*}
Thus we obtain \eqref{eq:6.13} for $\Ga_n=\emptyset$ by choosing $\k =
(7c_4/(8c_5))\wedge 1>0$.
The case that $c_4\le s \le \pi$ can be discussed in a similar way. The 
proof of \eqref{eq:6.13} is completed for all $(s,t)\in E_{2,n}$. 

\begin{rem}
Our argument relies on the specific form $k$ of the weights.  It can be
extended to linearly growing weights, but not for general weights such
as a power of $k$.
\end{rem}

\section{Proof of Theorem \ref{thm:4.E.1}}

For a function $\b=\b(\cdot): [-1,1] \to (0,1)$, we denote by 
$\nu_{\b(\cdot)}^{\La_\ell}$ the distribution on $\Si_{\La_\ell}$ of 
$\{0,1\}$-valued independent sequences $\{\eta_k\}_{k\in\La_\ell}$ such that
$E[\eta_k]=\b(k/\ell), k\in \La_\ell$.  The next lemma explains the reason
that the functions of the form \eqref{eq:4.E.rho} appear in the limit.

\begin{lem} \label{lem:4.1}
For a function $\b(\cdot) \equiv \b(\cdot;a,b)$ of the form
\eqref{eq:4.E.rho}, the conditional measure of $\nu_{\b(\cdot)}^{\La_\ell}$
on $\Si_{\La_\ell,K,M}$ is a uniform probability measure for every
$a\in (0,1)$ and $b\in\R$:
$$
\nu_{\b(\cdot)}^{\La_\ell}(\cdot|\Si_{\La_\ell,K,M})=
\nu_{\La_\ell,K,M}(\cdot).
$$
\end{lem}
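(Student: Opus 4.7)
The plan is a direct computation: I would write out $\nu_{\b(\cdot)}^{\La_\ell}(\eta)$ explicitly for $\eta\in\Si_{\La_\ell,K,M}$ and observe that the dependence on $\eta$ factors entirely through the conserved quantities $K_{\La_\ell}(\eta)$ and $M_{\La_\ell}(\eta)$, so that after conditioning on $\Si_{\La_\ell,K,M}$ the surviving measure is uniform.

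The first step is to rewrite one factor of the product. Using $\b(x) = e^{bx}a/g(x)$ with $g(x)=e^{bx}a+(1-a)$, the complementary probability is $1-\b(x) = (1-a)/g(x)$. Therefore, for each $k\in\La_\ell$,
\begin{equation*}
\b(k/\ell)^{\eta_k}\bigl(1-\b(k/\ell)\bigr)^{1-\eta_k}
= \frac{(e^{bk/\ell}a)^{\eta_k}(1-a)^{1-\eta_k}}{g(k/\ell)}.
\end{equation*}
The denominator $\prod_{k\in\La_\ell} g(k/\ell)$ depends only on $a,b,\ell$, not on $\eta$, so it plays the role of a normalization constant.

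The second step is to collect exponents in the numerator. Writing $|\La_\ell|=2\ell+1$, the product gives
\begin{equation*}
\prod_{k\in\La_\ell}(e^{bk/\ell}a)^{\eta_k}(1-a)^{1-\eta_k}
= a^{K_{\La_\ell}(\eta)}(1-a)^{2\ell+1-K_{\La_\ell}(\eta)}
\exp\!\Bigl\{\tfrac{b}{\ell}\,M_{\La_\ell}(\eta)\Bigr\}.
\end{equation*}
On the event $\Si_{\La_\ell,K,M}$ the exponents $K_{\La_\ell}(\eta)=K$ and $M_{\La_\ell}(\eta)=M$ are fixed, so this factor is constant in $\eta$. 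Hence $\nu_{\b(\cdot)}^{\La_\ell}$ assigns the same probability to every $\eta\in\Si_{\La_\ell,K,M}$.

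The third step is to conclude: the conditional law $\nu_{\b(\cdot)}^{\La_\ell}(\,\cdot\,|\Si_{\La_\ell,K,M})$ is then the uniform probability on $\Si_{\La_\ell,K,M}$, which is by definition $\nu_{\La_\ell,K,M}$. There is really no obstacle here; the lemma is essentially the remark that the Bernoulli family indexed by $(a,b)$ is an exponential family with sufficient statistics $(K_{\La_\ell},M_{\La_\ell})$, and the form \eqref{eq:4.E.rho} was precisely chosen so that the log-density is affine in these two observables.
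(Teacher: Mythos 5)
Your proof is correct and follows essentially the same route as the paper: both arguments are the direct exponential-family computation showing that $\nu_{\b(\cdot)}^{\La_\ell}(\eta)$ depends on $\eta$ only through the sufficient statistics $K_{\La_\ell}(\eta)$ and $M_{\La_\ell}(\eta)$ (the paper phrases this as $\nu_{\b(\cdot)}^{\La_\ell}(\eta)=Z^{-1}e^{bM_{\La_\ell}(\eta)/\ell}\nu_a^{\La_\ell}(\eta)$ and then invokes uniformity of the homogeneous Bernoulli measure conditioned on $K$, while you absorb that last step into the same product computation). No gaps.
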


\begin{proof}
For $\a\in (0,1)$, let $\mu_\a$ be the probability measure on $\{0,1\}$
defined by $\mu_\a(1)=\a$.  Then, if $\a= e^ba/(e^ba+(1-a))$ for some
$a\in (0,1)$ and $b\in\R$, it holds that
$$
\mu_\a(\xi) = z_{a,b}^{-1} e^{b\xi}\mu_a(\xi),
$$
for $\xi=0,1$ with a constant $z_{a,b} = e^ba+(1-a)$.
Therefore, for $\nu_{\b(\cdot)}^{\La_\ell}$ on $\Si_{\La_\ell}$ with 
$\b(\cdot)$ of the form \eqref{eq:4.E.rho}, we have that
\begin{align*}
\nu_{\b(\cdot)}^{\La_\ell}(\eta) 
& = \prod_{k\in\La_\ell} \mu_{\b(k/\ell)}(\eta_k) 
 = \prod_{k\in\La_\ell} z_{a, bk/\ell}^{-1} 
e^{bk\eta_k/\ell} \mu_a(\eta_k) \\
& = Z^{-1} e^{b\sum_{k\in\La_\ell} k\eta_k/\ell} \nu_a^{\La_\ell}(\eta),
\end{align*}
for $\eta\in\Si_{\La_\ell}$ with a normalizing constant $Z \equiv 
\prod_{k\in\La_\ell} z_{a, bk/\ell}= E_{\nu_a^{\La_\ell}}[
e^{b\sum_{k\in\La_\ell} k\eta_k/\ell}]$.  This implies that
$$
\nu_{\b(\cdot)}^{\La_\ell}(\cdot|\Si_{\La_\ell,K,M})
=\nu_a^{\La_\ell}(\cdot|\Si_{\La_\ell,K,M}),
$$
since $\sum_{k\in\La_\ell} k\eta_k =M$ on $\Si_{\La_\ell,K,M}$.
However, $\nu_a^{\La_\ell}(\cdot|K_{\La_\ell}=K)$ is a uniform measure on
$\{\eta\in \Si_{\La_\ell}; K_{\La_\ell}(\eta)=K\}$ so that
$\nu_a^{\La_\ell}(\cdot|\Si_{\La_\ell,K,M})$ is also a uniform measure on
$\Si_{\La_\ell,K,M}$, that is, $\nu_{\La_\ell,K,M}$.
\end{proof}

We next establish the one-to-one correspondence between $(\rho,m)$ and
$(a,b)$ defined by \eqref{eq:2.5-1}.  In particular, one can uniquely find
$(a,b)$ for every $(\rho,m)$ in Theorem \ref{thm:4.E.1}.
Consider the map $\Phi$ for $(a,b)\in (0,1)\times\R$ to $(\rho,m)=
(F(a,b),G(a,b))\in D:= \{(\rho,m); \rho\in (0,1), m\in (-v/2,v/2)\}$
with $v=\rho(1-\rho)$ defined by
$$
F(a,b) = \frac12\int_{-1}^1\b(x; a,b)dx, \quad
G(a,b) = \frac14\int_{-1}^1 x\b(x;a,b)dx.
$$

\begin{lem} \label{lem:4.2}
The map $\Phi$ is a diffeomorphism from $(0,1)\times\R$ onto the domain $D$.
\end{lem}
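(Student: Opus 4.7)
The plan is to prove the diffeomorphism claim by verifying four properties of $\Phi$: smoothness, positivity of the Jacobian determinant on all of $(0,1) \times \R$, injectivity, and surjectivity onto $D$.

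I would first compute $\partial_a \b = \b(1-\b)/(a(1-a))$ and $\partial_b \b = x\,\b(1-\b)$ by direct differentiation of \eqref{eq:4.E.rho}. Writing $v_\b(x) := \b(x;a,b)(1-\b(x;a,b))$, this gives
\begin{equation*}
\det D\Phi(a,b) = \frac{1}{8\,a(1-a)} \left\{ \int_{-1}^1 v_\b\,dx \cdot \int_{-1}^1 x^2 v_\b\,dx - \left(\int_{-1}^1 x v_\b\,dx\right)^{\!2} \right\}.
\end{equation*}
Since $\b(x;a,b) \in (0,1)$ implies $v_\b > 0$ pointwise, the Cauchy--Schwarz inequality for $x$ against the positive finite measure $v_\b(x)\,dx$ is strict, so $\det D\Phi > 0$ throughout $(0,1) \times \R$; in particular $\Phi$ is a local diffeomorphism.

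For injectivity, I would fix $\rho \in (0,1)$ and analyze the level set $\{F = \rho\}$. Because $\partial_a F = \frac{1}{2a(1-a)}\int_{-1}^1 v_\b\,dx > 0$ and $F(a,b) \to 0,\, 1$ as $a \to 0^+,\, 1^-$ for each fixed $b$, this level set is, by the implicit function theorem, a smooth curve $a = a(b)$, $b \in \R$. Along it,
\begin{equation*}
\frac{d}{db}\,G(a(b),b) \;=\; G_b - \frac{F_b}{F_a}\,G_a \;=\; \frac{\det D\Phi}{F_a} \;>\; 0,
\end{equation*}
so $b \mapsto G(a(b),b)$ is strictly increasing, yielding injectivity of $\Phi$ (and incidentally the monotonicity asserted in Remark 2.1).

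For surjectivity I would proceed in two steps. The inclusion $\Phi((0,1) \times \R) \subset D$ follows from the strict bound $\bigl|\int_{-1}^1 x\,\b(x)\,dx\bigr| < 2\rho(1-\rho)$, valid whenever $\b\colon[-1,1] \to (0,1)$ satisfies $\int_{-1}^1 \b\,dx = 2\rho$, by comparison with the extremal indicator profile. For the reverse inclusion, I would change variables $u := \log(a/(1-a)) \in \R$ so that $\b(x;a,b) = \sigma(u+bx)$ with $\sigma(y) = e^y/(1+e^y)$, and exploit the antiderivative
\begin{equation*}
\int_{-1}^1 \sigma(u+bx)\,dx \;=\; \frac{1}{b}\log\frac{1+e^{u+b}}{1+e^{u-b}}
\end{equation*}
to locate the level set $F = \rho$ asymptotically. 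One reads off $u(b) = (2\rho-1)b + O(1)$ as $b \to +\infty$, whence $\b(x;a(b),b) \to \mathbf{1}_{\{x > 1-2\rho\}}$ pointwise for a.e.\ $x$, and dominated convergence gives $\lim_{b\to+\infty} G(a(b),b) = \tfrac{1}{4}\int_{1-2\rho}^1 x\,dx = \tfrac{1}{2}\rho(1-\rho) = v/2$, with the symmetric limit $-v/2$ as $b \to -\infty$. Combined with the strict monotonicity above, this shows that $b \mapsto G(a(b),b)$ bijects $\R$ onto $(-v/2, v/2)$, so $\Phi$ bijects $(0,1) \times \R$ onto $D$; together with the non-vanishing Jacobian, $\Phi$ is a diffeomorphism. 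The main obstacle will be this last asymptotic analysis near $b = \pm\infty$: one must pin down the growth of $u(b)$ precisely enough to identify the limit profile as a genuine step function, so that $G$ really sweeps out the full open interval $(-v/2, v/2)$ rather than a strictly smaller one.
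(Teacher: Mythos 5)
Your proposal is correct and follows essentially the same route as the paper: positivity of the Jacobian via Cauchy--Schwarz applied to the measure $v_\b(x)\,dx$, injectivity by showing $G$ increases strictly in $b$ along each level set $\{F=\rho\}$ (the paper phrases this as the arcs $C_b$ being ordered in the $\rho$-$m$ plane), and surjectivity by identifying the step-function limit $\mathbf{1}_{\{x>1-2\rho\}}$ of $\b(\cdot;a,b)$ along that level set as $b\to\pm\infty$. The only cosmetic difference is that the paper solves $F(a,b)=\rho$ for $a$ in closed form via \eqref{eq:2.a-3} instead of through the asymptotic $u(b)=(2\rho-1)b+O(1)$, which disposes of the one obstacle you flag at the end.
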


\begin{proof}
Recalling that $\b(x;a,b)$ is given by $\b(x;a,b) = e^{bx}a/g(x;a,b)$
with $g(x)\equiv g(x;a,b)= e^{bx}a+(1-a)$, we can easily compute the 
derivatives of $F$ and $G$:
\begin{align*}
& \frac{\partial F}{\partial a} 
= \frac12\int_{-1}^1 \frac{e^{bx}}{g(x)^2} dx,
\quad 
\frac{\partial F}{\partial b} 
= \frac{a(1-a)}2\int_{-1}^1 \frac{xe^{bx}}{g(x)^2} dx, \\
& \frac{\partial G}{\partial a} 
= \frac14\int_{-1}^1 \frac{xe^{bx}}{g(x)^2} dx,
\quad 
\frac{\partial G}{\partial b} 
= \frac{a(1-a)}4\int_{-1}^1 \frac{x^2e^{bx}}{g(x)^2} dx.
\end{align*}
This, with the help of Schwarz's inequality, implies that the Jacobian
of the map $\Phi$ is positive everywhere, that is, 
$J= \frac{\partial F}{\partial a}\frac{\partial G}{\partial b}
- \frac{\partial F}{\partial b}\frac{\partial G}{\partial a} >0$.
In particular, the map $\Phi$ is a local diffeomorphism.

For every $b\in\R$, set $C_b := \{(F(a,b),G(a,b))\in D; a\in (0,1)\}$.
Then, $C_b$ is a Jordan arc in $D$ connecting two points $(0,0)$ and $(1,0)$.
Indeed, from $\partial F/\partial a>0$, $C_b$ has no double points so that
it is a Jordan arc.  As $a\downarrow 0$, we have $\b(x;a,b)\to 0$ so that
$(F(a,b),G(a,b))\to (0,0)$, while $(F(a,b),G(a,b))\to (1,0)$ as
$a\uparrow 1$, since $\b(x;a,b)\to 1$.  Especially, 
$C_0=\{(a,0);a\in (0,1)\}$ is a line segment connecting $(0,0)$ and $(1,0)$.
From the fact that $J>0, \partial F/\partial a>0$ and 
$\partial G/\partial b>0$, we see that the arc $C_{b_1}$ is located above
$C_{b_2}$ in the $\rho$-$m$ plane if $b_1>b_2$.  This proves that the map
$\Phi$ is one to one.

To show the onto property of $\Phi$, we consider $(a,b)$ satisfying
$F(a,b) = \rho$ for a fixed $\rho\in (0,1)$.  From \eqref{eq:2.a-3},
this condition can be rewritten as
$$
a = \frac{e^{2b\rho}-1}{e^b+ e^{2b\rho}- e^{b(2\rho-1)}-1}
$$
and therefore
$$
\b(x;a,b) = \frac{e^{bx}(e^{2b\rho}-1)}{e^{bx}(e^{2b\rho}-1)+
(e^b - e^{b(2\rho-1)})},
$$
which behaves as
\begin{equation*}
\b(x;a,b) \to \left\{ \begin{aligned}
1 \quad & \text{ if } \quad x+2\rho>1, \\
0 \quad & \text{ if } \quad x+2\rho<1, 
\end{aligned} \right.
\end{equation*}
as $b\to\infty$.  Thus we have
$$
\lim_{b\to\infty} G(a,b) = \frac14 \int_{1-2\rho}^1 xdx
= \frac12 \rho(1-\rho),
$$
under the condition $F(a,b)=\rho$.  In a similar way, we can show that
$$
\lim_{b\to-\infty} G(a,b) = -\frac12 \rho(1-\rho).
$$
This proves the onto property of the map $\Phi$.
\end{proof}

We are now at the position to complete the proof of Theorem \ref{thm:4.E.1}.
Our method is standard in the sense that we apply the local limit theorem
to compute the conditional distributions, see, e.g., \cite{KL}, p.353.
The novelty lies in the following point.  When applying Proposition 
\ref{lem:4.E.2}, the term $q_0(y_1,y_2)$ needs to be uniformly positive to
dominate the error term and this restricts our choice of the 
underlying measures in such a way that both $y_1$ and $y_2$ are sufficiently
close to $0$.  For instance, in \cite{KL}, it was only required to adjust 
the density parameter so that the macroscopic profile was constant over the
space, but here the problem involves two parameters $K$ and $M$ or $\rho$ 
and $m$.  However, once we take the inhomogeneous Bernoulli measures with
the functions $\b(\cdot; a,b)$ as macroscopic profiles, our choice allows 
two parameters $a$ and $b$.  This suits our purpose and we can realize the
situation that both $y_1$ and $y_2$ are close to $0$ at the same time under 
a suitable choice.  The local limit theorem with defects is prepared to treat
the numerator $B_\ell$ introduced later.

We denote the supports of the local functions $f_j$ in Theorem \ref{thm:4.E.1}
by $\Ga_j\subset\Z$ and set $\Ga (=\Ga^{(\ell)})= \cup_{j=1}^p \t_{k_j}\Ga_j$, 
where $\t_{k_j}\Ga_j = \Ga_j+k_j$ and recall $k_j = k_{j,\ell}$.  
Note that $\Ga\subset \La_\ell$ and
$\{\t_{k_j}\Ga_j\}_{j=1}^p$ are disjoint if $\ell$ is sufficiently large,
since $k_j$ asymptotically behave as $x_j\ell$ and $\{x_j\}_{j=1}^p \subset
(-1,1)$ are distinct.  Then, for every function $\b_\ell(\cdot)$ of the form
\eqref{eq:4.E.rho}, Lemma \ref{lem:4.1} shows that
\begin{align} \label{eq:4.E.8}
& E_{\nu_{\La_\ell,K,M}}[\prod_{j=1}^p \t_{k_{j}} f_j] 
- \prod_{j=1}^p E_{\nu_{\b(x_j)}}[f_j] \\
& = \sum_{\xi\in\{0,1\}^\Ga} \big(\prod_{j=1}^p f_j(\xi_j)
- \prod_{j=1}^p E_{\nu_{\b(x_j)}}[f_j]\big) 
\frac{\nu_{\b_\ell(\cdot)}^{\La_\ell}\big( \eta|_\Ga = \xi, 
K_{\La_\ell}(\eta)=K, M_{\La_\ell}(\eta)=M\big)}
{\nu_{\b_\ell(\cdot)}^{\La_\ell}\big( K_{\La_\ell}(\eta)=K,
M_{\La_\ell}(\eta)=M\big)},  \notag
\end{align}
where $\b(\cdot)$ is the function determined in Theorem \ref{thm:4.E.1},
$\eta|_\Ga$ stands for the restriction of $\eta$ to $\Ga$ and
we denote by $\xi_j = \xi|_{\Ga_j}$.  For given $K=K_\ell$ and $M=M_\ell$,
our special choice of $\b_\ell(\cdot)$ will be the function 
$\b_\ell(\cdot)=\b(\cdot;a_\ell,b_\ell)$ of the form \eqref{eq:4.E.rho}
with the solution $(a_\ell,b_\ell)$ of two equations \eqref{eq:2.5-1}
for $\rho=K/(2\ell+1)$ and $m=M/(2\ell+1)^2$.

To apply Proposition \ref{lem:4.E.2} for $\{\eta_k\}_{k\in\La_\ell}$, we need
to shift it and consider $X=\{X_k\}_{k=1}^n$ with $n=2\ell+1$ 
determined by $X_k = \eta_{k-\ell-1} (\equiv (\t_{\ell+1}^{-1}\eta)_k)$ for
$k=1,2,\ldots,n$.  For such $X$ and defects $\Ga_n \subset \{1,2,\ldots,n\}$,
we denote two sums $S_n$ and $T_n$ in \eqref{eq:3.S} by $S_n^{\Ga_n}(X)$ and
$T_n^{\Ga_n}(X)$, respectively, to indicate the defects clearly.  Then, 
we have that 
\begin{equation} \label{eq:4.E.7}
S_n^\emptyset(X) = K_{\La_\ell}(\eta)
\quad \text{ and } \quad
T_n^\emptyset(X) = M_{\La_\ell}(\eta) + (\ell+1) K_{\La_\ell}(\eta).
\end{equation}
The numerator of the fractional expression in the right
hand side of \eqref{eq:4.E.8} is equal to
\begin{align*}
& P_{\a^n}\big( X|_{\t_{\ell+1}\Ga} = \t_{\ell+1}^{-1}\xi, 
  S_n^\emptyset(X)=K, T_n^\emptyset(X)=M+ (\ell+1) K\big)  \\
& \quad = \nu_{\b_\ell(\cdot)}^\Ga(\xi) P_{\a^n}\left( 
S_n^{\t_{\ell+1}\Ga}(X)
  =K - S_n^{(\t_{\ell+1}\Ga)^c}(\t_{\ell+1}^{-1}\xi), \right.\\
& \qquad\qquad\qquad\qquad  \left.
T_n^{\t_{\ell+1}\Ga}(X) =M+ (\ell+1) K - 
   T_n^{(\t_{\ell+1}\Ga)^c}(\t_{\ell+1}^{-1}\xi)\right),
\end{align*}
where  $\a^n =\{\a_k^n\}_{k=1}^n := \{\b_\ell((k-\ell-1)/\ell)\}_{k=1}^n$.
We have denoted by $P_{\a^n}$ the distribution of $X$ such that 
$E[X_k] = \a_k^n$ for $1\le k \le n$,
by $\nu_{\b_\ell(\cdot)}^\Ga$
the measure $\nu_{\b_\ell(\cdot)}^{\La_\ell}$ restricted on $\Ga$, and
$(\t_{\ell+1}\Ga)^c = \{1,2,\ldots,n\}\setminus \t_{\ell+1}\Ga$.
However, under the scaling conditions \eqref{eq:4.E.1} and \eqref{eq:4.E.2},
by Lemma \ref{lem:4.2}, $\b_\ell(\cdot)$ converges to $\b(\cdot)$ 
uniformly: 
\begin{equation} \label{eq:4.c}
\lim_{\ell\to\infty}\max_{x\in [-1,1]} |\b_\ell(x)-\b(x)|=0.
\end{equation}
Hence, we have that
$$
\sum_{\xi\in\{0,1\}^\Ga} \big(\prod_{j=1}^p f_j(\xi_j)- 
\prod_{j=1}^p E_{\nu_{\b(x_j)}}[f_j]\big) \nu_{\b_\ell(\cdot)}^\Ga(\xi) 
=o(1),
$$
as $\ell\to\infty$.  Therefore, \eqref{eq:4.E.8} can be rewritten as
\begin{equation} \label{eq:4.E.A}
= \sum_{\xi\in\{0,1\}^\Ga} \big(\prod_{j=1}^p f_j(\xi_j)- 
\prod_{j=1}^p E_{\nu_{\b(x_j)}}[f_j]\big) 
\nu_{\b_\ell(\cdot)}^\Ga(\xi) \times \left\{ \frac{B_\ell}{A_\ell} -
1 \right\} + o(1),
\end{equation}
where
\begin{align*}
& A_\ell := P_{\a^n}\left( S_n^\emptyset(X) =K, 
T_n^\emptyset(X)=M+ (\ell+1) K 
  \right), \\
& B_\ell := P_{\a^n}\left( 
S_n^{\t_{\ell+1}\Ga}(X) =K - S_n^{(\t_{\ell+1}\Ga)^c}(\t_{\ell+1}^{-1}\xi),
  \phantom{\frac{n+1}2}\right.\\
& \qquad\qquad\quad  \left.
T_n^{\t_{\ell+1}\Ga}(X) =M+ (\ell+1) K 
  - T_n^{(\t_{\ell+1}\Ga)^c}(\t_{\ell+1}^{-1}\xi)\right).
\end{align*}

We now apply Proposition \ref{lem:4.E.2} to compute the asymptotic behaviors 
of $A_\ell$ and $B_\ell$.  Note that, from \eqref{eq:4.c},
$\a^n=\{\a_k^n\}_{k=1}^n$ satisfies
the condition \eqref{eq:3.alpha} with $\a(x) :=\b(2x-1), x\in [0,1]$.
By the choice of $(a_\ell,b_\ell)$, we can show that
$y_1=y_2=O(1/\sqrt{n})$ as $\ell\to\infty$ (or equivalently $n\to\infty$)
for $y_1$ and $y_2$ defined by \eqref{eq:4.E.5} for both $A_\ell$ and 
$B_\ell$.  Indeed, for $A_\ell$,
\begin{align*}
y_1 & = \frac1{\sqrt{U_n}}(K-E_n) = \frac1{\sqrt{U_n}}\left\{K-
\sum_{k=1}^n \b_\ell \left(\frac{k-\ell-1}{\ell}\right)\right\} \\
& =\frac1{\sqrt{U_n}}\left[K- \ell \left\{ \int_{-1}^1 \b_\ell(x)dx
+O \left(\frac1{\ell}\right)\right\}\right] \\
& =\frac1{\sqrt{U_n}}\left[K- \ell \left\{ \frac{2K}{2\ell+1}
+O \left(\frac1{\ell}\right)\right\}\right]
= O(1/\sqrt{n}).
\end{align*}
Here, for the third equality, we have used the uniform bound:
$\sup_\ell \max_{x\in [-1,1]} |\b_\ell'(x)|<\infty$, recall the
choice of $\b_\ell(\cdot)$ for the fourth and \eqref{eq:3.0} for the
last.  Similarly,
\begin{align*}
y_2 & = \frac1{\sqrt{V_n}}(L-F_n) = \frac1{\sqrt{V_n}}\left\{M+(\ell+1)K-
\sum_{k=1}^n k \b_\ell \left(\frac{k-\ell-1}{\ell}\right)\right\} \\
& =\frac1{\sqrt{V_n}}\left[M+(\ell+1)K- \ell^2 \left\{ \int_{-1}^1 
(x+1)\b_\ell(x)dx +O \left(\frac1{\ell}\right)\right\}\right] \\
& = O(1/\sqrt{n}).
\end{align*}
For $B_\ell$, compared with $A_\ell$, there are additional terms 
$$
-\frac1{\sqrt{U_n}} \left\{S_n^{(\t_{\ell+1}\Ga)^c}(\t_{\ell+1}^{-1}\xi)
- \sum_{k\in \t_{\ell+1}\Ga} \b_\ell \left(\frac{k-\ell-1}{\ell}\right)
\right\}
$$
in $y_1$ and 
$$
-\frac1{\sqrt{V_n}} \left\{T_n^{(\t_{\ell+1}\Ga)^c}(\t_{\ell+1}^{-1}\xi)
- \sum_{k\in \t_{\ell+1}\Ga} k \b_\ell \left(\frac{k-\ell-1}{\ell}\right)
\right\}
$$
in $y_2$, but both these terms behave as $O(1/\sqrt{n})$.
Thus, from Proposition \ref{lem:4.E.2}, two terms $A_\ell$ and $B_\ell$ 
both behave as
\begin{equation*}
A_\ell, \; B_\ell = \frac1{\sqrt{U_n V_n}} \left\{
\frac1{2\pi\sqrt{1-\la^2}} +o(1)\right\},
\end{equation*}
as $\ell\to\infty$ (or equivalently $n\to\infty$).  Since $1-\la^2 >0$, this
shows that $A_\ell/B_\ell \to 1$ and completes the poof of Theorem 
\ref{thm:4.E.1} from \eqref{eq:4.E.A}.

\begin{rem} \label{rem:4.1}
If we take $\b(\cdot)$ itself instead of $\b_\ell(\cdot)$, the terms $y_1$
and $y_2$ in the exponential of $q_0$ behave as $o(\sqrt{n})$ so that
$q_0$ in general converges to $0$ and the local limit theorem turns 
out to be useless.
\end{rem}

\section{Proof of Corollary \ref{cor:2.2} and relations to Vershik curves}

For the proof of Corollary \ref{cor:2.2}, it suffices to show the weaker
convergence
\begin{equation}\label{eq:5.a}
\lim_{\ell\to\infty} \nu_{\La_\ell,K,M}\left( \left| \lan \tilde{\psi}^\ell,
\fa \ran - \lan \psi, \fa\ran\right| > \de \right) =0,
\end{equation}
for every $\de>0$ and $\fa\in C([-1,1])$, where $\lan \psi, \fa\ran=
\int_{-1}^1\psi(x)\fa(x)dx$.  In fact, \eqref{eq:5.a} implies the stronger
convergence result stated in Corollary \ref{cor:2.2} due to the
monotonicity of $\tilde{\psi}^\ell$, see Remark 2.5 in \cite{FS-1}. 
For proving \eqref{eq:5.a}, it is enough to show that
$\lan \tilde{\psi}^\ell,\fa \ran$ converges to $\lan \psi, \fa\ran$
in $L^2$-sense.  However, recalling \eqref{eq:2.a-2} and \eqref{eq:2.a-1},
a simple computation leads to
$$
\lan \tilde{\psi}^\ell, \fa\ran = \frac1{\ell} \sum_{k=-\ell+1}^\ell
  \eta_k \tilde{\fa}\left(\frac{k}{\ell}\right),
$$
where $\tilde{\fa}(x) = \int_{-1}^x \fa(y)dy$.  Therefore, once we can show
that
\begin{equation} \label{eq:5.1}
\lim_{\ell\to\infty} E_{\nu_{\La_\ell,K,M}}
\left[ \frac1{\ell} \sum_{k=-\ell+1}^\ell \eta_k \tilde{\fa}
\left(\frac{k}{\ell}\right) \right] = \lan\b,\tilde{\fa}\ran,
\end{equation}
and
\begin{equation} \label{eq:5.2}
\lim_{\ell\to\infty} E_{\nu_{\La_\ell,K,M}}
\left[ \left\{\frac1{\ell} \sum_{k=-\ell+1}^\ell \eta_k \tilde{\fa}
\left(\frac{k}{\ell}\right) \right\}^2\right] = \lan\b,\tilde{\fa}\ran^2,
\end{equation}
the $L^2$-convergence follows by noting that $\lan\b,\tilde{\fa}\ran
= \lan\psi,\fa\ran$.  We only give the proof of \eqref{eq:5.2}, since
\eqref{eq:5.1} is similar and easier.  To this end, from the estimate
$$
\left| \lan \tilde{\psi}^\ell, \fa\ran - \int_{-1}^1
  \eta_{[\ell x]+1} \tilde{\fa}(x)dx \right| \le 2 \sup_{|x_1-x_2|\le 
  \frac1{\ell}} |\tilde{\fa}(x_1)-\tilde{\fa}(x_2)|,
$$
it is enough to prove
\begin{equation} \label{eq:5.3}
\lim_{\ell\to\infty} E_{\nu_{\La_\ell,K,M}}
\left[ \left\{\int_{-1}^1 \eta_{[\ell x]} \tilde{\fa}(x)dx
\right\}^2\right] = \lan\b,\tilde{\fa}\ran^2,
\end{equation}
where $[\ell x]$ stands for the integer part of $\ell x$.
However, the expectation in \eqref{eq:5.3} is expanded as
$$
\int_{-1}^1 \int_{-1}^1 
E_{\nu_{\La_\ell,K,M}}[\eta_{[\ell x_1]}\eta_{[\ell x_2]}]
 \tilde{\fa}(x_1)\tilde{\fa}(x_2) dx_1 dx_2,
$$
and Theorem \ref{thm:4.E.1} applied for $p=2$ and $f_1(\eta) = f_2(\eta)
=\eta_0$ implies that
$$
\lim_{\ell\to\infty} 
E_{\nu_{\La_\ell,K,M}}[\eta_{[\ell x_1]}\eta_{[\ell x_2]}]
= \b(x_1) \b(x_2).
$$
Thus, Lebesgue's convergence theorem shows \eqref{eq:5.3} and this completes
the proof of Corollary \ref{cor:2.2}.

We finally compare our result in Corollary \ref{cor:2.2} with those in
\cite{BBE}.  In \cite{BBE}, the grand canonical ensembles 
in a rectangular box for the uniform (Bose) statistics are dealt based on
a combinatorial method, while we have discussed the canonical ensembles 
in a rectangular box for the restricted uniform (Fermi) statistics due to a 
probabilistic approach.  

Theorem 1 of \cite{BBE} shows, by rotating the plane coordinates by 45 
degree, that the limit curve $t\in [0,1] \mapsto L(t)$ in the box
with the ratio of height/width given by $\bar{\rho}/(1-\bar{\rho}), \bar{\rho}
\in (0,1)$ is determined by
$$
L(t) \equiv L_{\bar{\rho},\bar{c}}(t) = \frac1{\bar{c}} \log \frac{h(t)}{h(0)},
$$
where $h(t) = e^{-\bar{c}t}-e^{\bar{c}t} + e^{-\bar{c}(2-2\bar{\rho}-t)} 
-e^{-\bar{c}(t-2\bar{\rho})}, t\in [0,1],$ and $\bar{c}\in\R$ is a parameter
which controls the area.  By rotating
back to the original coordinates, this curve is transformed to the curve
$v=\phi(u)$ in $u$-$v$ plane given implicitly by
\begin{equation} \label{eq:5.5}
u= \frac1{\sqrt{2}} (t+L(t)) \in [0,(1-\bar{\rho})/\sqrt{2}],
\quad 
v= \frac1{\sqrt{2}} (-t+L(t)) \in [-\sqrt{2}\bar{\rho},0].
\end{equation}
However, as seen in Proposition 4.4 of \cite{FS-1}, the curve $v=\phi(u)$
appearing in the uniform statistics can be related to the curve 
$y=\bar{\psi}(x)$ in $x$-$y$ plane appearing in the restricted uniform
statistics by
$$
\bar{\b}(x) \equiv -\bar{\psi}'(x) = 
\frac{-\phi'(G_{\phi}^{-1}(x))}{1-\phi'(G_{\phi}^{-1}(x))},
$$
where $G_{\phi}(u)=u-\phi(u)$ and $G_{\phi}^{-1}$ is its inverse function.
But, \eqref{eq:5.5} shows that $G_{\phi}(u)=\sqrt{2}t$ and thus, setting
$x=G_{\phi}(u) \in [0,\sqrt{2}]$, we have that
$$
\bar{\b}(\sqrt{2}t) = \frac{-\phi'(u)}{1-\phi'(u)} = \frac{1-L'(t)}2,
$$
or equivalently
$$
\bar{\b}(x) = \frac{1-L'(x/\sqrt{2})}2.
$$
A simple computation leads to $L'(t) = h'(t)/(\bar{c}h(t))$ and
$L''(t) = \bar{c}(1-L'(t)^2),$ since $h''(t) = \bar{c}^2 h(t)$.  This proves
that
$$
\bar{\b}'(x) = -\frac1{2\sqrt{2}} L''(x/\sqrt{2})
= \frac{\bar{c}}{2\sqrt{2}} \left(L'(x/\sqrt{2})^2-1\right)
= - \sqrt{2}\bar{c} \, \bar{\b}(x) (1-\bar{\b}(x)),
$$
so that we arrive at the ordinary differential equation for $\bar{\psi}$:
\begin{equation} \label{eq:5.6}
\bar{\psi}''(x) + \sqrt{2}\bar{c} \, \bar{\psi}'(x) (1+\bar{\psi}'(x)) =0, 
\quad x\in [0,\sqrt{2}].
\end{equation}
Moreover, noting that $L(0)=0$ and $L(1)=1-2\bar{\rho}$, the height 
difference of $\bar{\psi}$ at two boundary points is given by
$$
\bar{\psi}(0)- \bar{\psi}(\sqrt{2}) = \int_0^{\sqrt{2}}\bar{\b}(x)dx
= \sqrt{2}\bar{\rho},
$$
or, by normalizing $\bar{\psi}(\sqrt{2})=0$, we have that $\bar{\psi}(0)
=\sqrt{2}\bar{\rho}$.

To compare $\bar{\psi}$ with $\psi$ in Corollary \ref{cor:2.2}, note that
$\psi$ is defined for $x\in [-1,1]$.  The shift in $x$ does not change
the form of the equation.  To consider on the interval of same length,
we introduce the scaling for $\bar{\psi}$ defined on $[0,\sqrt{2}]$ by
$$
\bar{\psi}^\ga(x) := \frac1{\ga} \bar{\psi}(\ga x), 
  \quad x\in [0,\sqrt{2}/\ga],
$$
for $\ga>0$.  Then, if $\bar{\psi}$ satisfies $\bar{\psi}''+\k \bar{\psi}'
(1+\bar{\psi}')=0$, $\bar{\psi}^\ga$ satisfies the equation
$(\bar{\psi}^\ga)''+\k\ga (\bar{\psi}^\ga)'(1+(\bar{\psi}^\ga)')=0$.
Applying this for the equation \eqref{eq:5.6} with $\ga=
1/\sqrt{2}$ and $\k=\sqrt{2} \bar{c}$, we can derive the equation for 
$\bar{\psi}^{1/\sqrt{2}}$:
$$
(\bar{\psi}^{1/\sqrt{2}})''+\bar{c} (\bar{\psi}^{1/\sqrt{2}})'
(1+(\bar{\psi}^{1/\sqrt{2}})')=0, \quad x\in [0,2],
$$
with $\bar{\psi}^{1/\sqrt{2}}(0)=2\bar{\rho}$ and 
$\bar{\psi}^{1/\sqrt{2}}(2)=0$.  This coincides with the ordinary differential
equation \eqref{eq:Vershik} for the Vershik curve with $c=\bar{c}$ and 
$\rho=\bar{\rho}$.  Thus, we can identify the limit curves
in a rectangular box for grand canonical ensembles in the uniform statistics
and for canonical ensembles in the restricted uniform statistics, namely
$\bar{\psi}^{1/\sqrt{2}} =\psi$ (except the shift in $x$ by $1$), if the 
relations $\bar{c} = -b$ and $\bar{\rho}=\rho$ hold.

\vskip 7mm
\noindent
{\bf Acknowledgement} $\,$ 
The author thanks Yoshiki Otobe for informing him the reference \cite{GR}.

\end{document}